\providecommand{\pgfsyspdfmark}[3]{}
   \let\NAT@parse\undefined
 \newtheorem{thm}{Theorem}
 \newtheorem{exe}{Example}
 \newtheorem{rem}{Remark}
 \newtheorem{lem}{Lemma}
 \newtheorem{prop}{Proposition}
 \newtheorem{claim}{Claim}
 \newtheorem{defin}{Definition}
 \newtheorem{ass}{Assumption}
 \newtheorem{cor}{Corollary}
 \newenvironment{theorem}[1][\,\!]{\ignorespaces\begin{thm}[#1]\rm }{\hfill \mbox{\footnotesize$\square$} \end{thm}}
 \newenvironment{example}{\ignorespaces\begin{exe}\rm }{\hfill \mbox{\footnotesize$\square$} \end{exe}}
 \newenvironment{proposition}{\begin{prop}}{\hfill \mbox{\footnotesize$\square$} \end{prop}}
 \newenvironment{remark}{\begin{rem}}{\hfill $\bullet$ \end{rem}}
 \newenvironment{lemma}{\begin{lem}}{\hfill  \mbox{\footnotesize$\square$} \end{lem}}
 \newenvironment{definition}{\vskip 3pt\begin{defin}}{\vskip 3pt
                                          \end{defin}}
 \newenvironment{assumption}{\vskip 3pt\begin{ass}}{\end{ass}\vskip 3pt}
 \newenvironment{proof}{\it Proof. \rm }{\hfill \mbox{\footnotesize$\blacksquare$}}
 \newenvironment{corollary}{\begin{cor}}{\hfill \mbox{\small$\square$} \end{cor}}
\newcommand{\dom}{\mathop{\rm dom}\nolimits}
\title{\LARGE \bf Hybrid Persistency of Excitation in Adaptive Estimation for Hybrid Systems}
\author{A.  Saoud \quad M.  Maghenem \quad   A.  Lor\'{\i}a  \quad R. G. Sanfelice  
\thanks{A. Saoud is with CentraleSupelec, University Paris-Saclay, Gif-sur-Yvette,  France, and the School of Computer Science at Mohammed VI Polytechnical University, Benguerir, Morocco (e-mail: adnane.soud@centralesupelec.fr).  
M. Maghenem is with GIPSA-Lab,  CNRS,  and University of Grenoble Alpes,  Grenoble,  France (e-mail: mohamed.maghenem@cnrs.fr);  
A.  Lor\'ia is with L2S,  CNRS,   91192 Gif-sur-Yvette,  France (e-mail: antonio.loria@cnrs.fr); 
R.  G.  Sanfelice is with the Dept. of Electrical and Computer Engineering, University of California,  Santa Cruz,  CA,  USA (e-mail:ricardo@ucsc.edu). The work of M. Maghenem and A. Lor\'{\i}a was supported by the French ANR via project HANDY, contract number ANR-18-CE40-0010, and the ANR PIA funding: ANR-20-IDEES-0002. }}
\begin{document}
\maketitle 

\begin{abstract} 
We propose a framework of stability analysis  for a class of linear non-autonomous hybrid systems,  with solutions evolving in continuous time governed by an ordinary differential equation and undergoing instantaneous changes  governed by a difference equation.  Furthermore, the jumps may also be triggered by exogeneous hybrid signals.  The proposed framework builds upon a generalization of notions of \textit{persistency of excitation} (PE) and \textit{uniform observability} (UO), which we redefine to fit the realm of hybrid systems. Most remarkably we propose for the first time in the literature a definition of hybrid persistency of excitation. Then, we establish conditions, under which, hybrid PE implies hybrid UO and, in turn,  uniform exponential stability (UES) and input-to-state stability (ISS). Our proofs rely on an original statement for hybrid systems, expressed in terms of $\boldsymbol{\mathcal L_p}$ bounds on the solutions. We also demonstrate the utility of our results on generic adaptive estimation problems. The first one concerns the so-called \textit{gradient systems}, reminiscent of the popular \textit{gradient-descent} algorithm.  The second one pertains to  the design of adaptive observers/identifiers for a class of hybrid systems that are nonlinear in the input and in the output, and linear in the unknown parameters. In both cases,  we illustrate through meaningful examples that the proposed hybrid framework succeeds in scenarii where the classical purely continuous- or discrete-time counterparts fail. 
\end{abstract}

\section{Introduction}

Persistency of excitation 
\cite{ASTBOH}, roughly speaking, is the property of a function of time that consists in the function's energy {\it never} vanishing.  Mathematically, the PE property may be expressed in various forms, depending,  {\it e.g.}, on whether its scalar argument is considered as a real or integer variable,  that is,  on whether the function is evolving in continuous or discrete time.  Over five decades, several definitions of PE have been proposed, in various contexts, to guarantee different stability properties. For linear time-varying systems, some PE properties guarantee uniform (in the initial time) exponential stability \cite{AND77} or uniform global asymptotic stability \cite{doi:10.1137/0315002}. With careful handling, which involves replacing some instance of the state in the system's equations with the system's solutions \cite{KHALIL}, PE-based statements tailored for linear systems may also apply to nonlinear systems \cite{al:LTVsystSCL}. In this case,  a solution-dependent PE notion is necessary and sufficient to ensure uniform asymptotic stability.  For particular classes of nonlinear non-autonomous systems forms of solution-independent PE conditions have been proposed, tailored for functions that depend both on time and the state \cite{TCLEETAC02,TCLEEACC03,al:TACDELTAPE}. A non-solution-dependent relaxed PE condition tailored for nonlinear systems is provided in \cite{al:TACDPEMAT}, where it is also showed to be necessary for uniform global asymptotic stability of generic nonlinear non-autonomous systems. 

The classes of systems where the PE property is used include, but are not restricted to, those appearing in problems of identification 
\cite{tao2003adaptive},  adaptive control \cite{Narendra,IOASUN},  model identification \cite{kurdila1995persistency}, learning-based identification \cite{sridhar2022improving},  and state estimation \cite{Besancon2007overview,989154}. For instance,  the so-called gradient systems,  which appear in the context of gradient-descent estimation algorithms  
are among the linear time-varying systems where PE is necessary and sufficient for UES of the origin.  Moreover, convergence rate  estimates \cite{brockett2000rate,al:LTVsystSCL} and strict Lyapunov functions for gradient systems are available in the literature \cite{JP4-SRIKANT-IJC}.  Other forms of sufficient conditions that involve relaxing the PE property for gradient systems, {\it e.g.}, by admitting the excitation to last only over a finite window of time,  have also been investigated for continuous-time systems \cite{chowdhary_IJC2014} as well as for data-driven models \cite{de2019persistency,dorfler2022persistency}.  Relaxed forms of PE are considered for gradient systems in \cite{praly2017convergence}, but these cannot ensure uniform convergence of the estimation errors towards the origin.  

One of the landmark results in the study of PE is that it is equivalent to uniform observability (UO) \cite{CLAN8} for passive systems satisfying structural properties reminiscent of the Kalman-Yacubovich-Popov Lemma \cite{AND77}.  The first results on  stability of the so-called model-reference-adaptive control schemes rely on such a fact \cite{IOASUN,NARANA}.  For nonlinear time-varying systems, there is an equivalence between PE and zero-state detectability \cite{TCLEETAC02}. The PE property is also broadly present in the context of adaptive observer/identifier design,  both for linear and nonlinear systems \cite{besancon2006adaptive,loria2009adaptive,989154,gevers1988robustness,guyader2003adaptive}.  Roughly speaking,  the parameter estimation  strategy relies on injecting external signals into the system, to excite all the modes and render the system observable, uniformly in the initial conditions. 

As it is well established nowadays,  the coexistence of continuous- and discrete-time  phenomena (what we call \textit{hybrid phenomena}) is unavoidable in some scenarios of control systems.  This is the case under the presence of impacts provoking instantaneous changes in the state,  as in manufacturing systems \cite{chryssolouris2013manufacturing},  cyber-physical systems \cite{sanfelice2016analysis} or when combining continuous and discrete state variables,   or in the presence of shocks and reflection-propagation \cite{chen2018mathematics}.   It is also the case under constrained sensing and actuation,  as in power \cite{9794285} and network control systems \cite{hespanha2007survey}.  In the aforementioned situations the solutions have a continuous evolution,  governed by a continuous-time system,  provided that they stay in a set called the \textit{flow set}.  Furthermore, they experience  instantaneous changes,  governed by a discrete-time system,  once they reach a subset called the \textit{jump set}.   The control of these systems may  require the estimation of some parameters that can affect the continuous- or the discrete-time dynamics, but they can also affect the flow and the jump sets.  Hence,  there is a need to extend the PE-based framework to the realm of the general class of hybrid systems.

In this paper, which is the outgrowth of \cite{ACCGradient},  we study PE in the realm of hybrid systems, using the framework of \cite{goebel2012hybrid}.  This framework covers impulsive systems, which are a type of non-autonomous systems that experience jumps under the influence of a piece-wise continuous signal, and not only depending on whether the state trajectory is in the flow or the jump set at a given instant. Our main contribution is the formulation of a property of PE tailored for a class of hybrid systems. The property we define captures, with particular efficacy,  the richness of time-varying piece-wise-continuous signals; richness that cannot be captured otherwise by classical definitions of PE,  defined purely in continuous or discrete time.  For instance,  we show that a hybrid version of the classical  gradient-descent identification algorithm successfully estimates the unknown parameters of a \textit{hybrid input-output plant} in cases where purely continuous- or purely discrete-time algorithms fail. More importantly, we establish that HPE implies a hybrid form of UO for the considered class of linear time-varying hybrid systems.  In turn, we establish UES and ISS under HPE. These statements are presented in Section \ref{sec:main}. In that light,  we stress that other definitions of observability for hybrid systems have been proposed in the literature, {\it e.g.}, \cite{vidal2003observability,vazquez2018observability}, but these are restricted to switched systems. 
Finally, we address the problem of adaptive observer/identifier design for a class of uncertain hybrid systems, which are affine in the unmeasured states and linear in the  unknown parameters. Based on well-known designs of adaptive observers/identifiers in continuous- and discrete-time \cite{besancon2006adaptive,loria2009adaptive,989154,gevers1988robustness,guyader2003adaptive}, we show that a properly constructed hybrid observer/identifier achieves uniform exponential convergence of the observation and estimation errors. Different from \cite{9682794}, where only the identification problem is solved by assuming  either CPE or DPE, our result holds under the relaxed HPE. Finally, we illustrate through a simple but meaningful example of an impact mechanical system, how the proposed  hybrid observer/identifier may supersede its purely continuous- or discrete-time  counterparts.  

In the next section, for completeness, we recall some definitions and notations that pertain to the hybrid-systems framework of \cite{goebel2012hybrid}.

\section{Preliminaries on Hybrid systems}
\label{sec:prel}

After \cite{goebel2012hybrid}, a hybrid dynamical system $\mathcal{H}$ is the combination of a constrained differential equation and a constrained difference equation given by
\begin{align}\label{305}
\mathcal{H}: & \left\{ 
\begin{array}{ccll} 
\hspace{-.7ex} \dot{x} &\hspace{-1ex} = &\hspace{-1ex} F(x) &\  x \in C  \\  
\hspace{-.7ex}    x^+ &\hspace{-1ex} = &\hspace{-1ex} G(x) &\  x \in D, 
\end{array} \right.
\end{align} 
\noindent where  $x \in \mathcal{X} \subseteq \mathbb{R}^{m_x}$ denotes the state variable, $\mathcal{X}$ the state space,  $C \subseteq \mathcal{X}$ and  $D \subseteq \mathcal{X}$ denote the flow and jump sets,  respectively,  and  $F : C \rightarrow \mathbb{R}^{m_x}$ and $G : D \rightarrow \mathbb{R}^{m_x}$ correspond to the flow and jump maps. Solutions to \eqref{305}  consist in functions with hybrid time domain defined as follows.

\begin{definition}[hybrid signal and hybrid arc]
    A {\it hybrid signal} $\phi$ is a function defined on a hybrid time domain denoted $\dom \phi \subset \mathbb{R}_{\geq 0} \times \mathbb Z_{\geq 0}$. The hybrid signal $\phi$ is parameterized by ordinary time $t \in \mathbb{R}_{\geq 0}$ and a discrete counter $j \in \mathbb Z_{\geq 0}$. Its domain of definition is denoted  $\dom \phi$ and is such that, for each $(T,J) \in \dom \phi$, $\dom \phi \cap \left( [0,T] \times \left\{ 0, 1, \ldots, J \right\} \right) = \cup^{J}_{j=0} \left([t_j,t_{j+1}] \times \left\{j\right\} \right)$ for a sequence $\left\{ t_j \right\}^{J+1}_{j=0}$  such that $t_{j+1} \geq t_j$,  $t_0 = 0$,  and $t_{j+1} = T$.  Moreover, if for each $j \in \mathbb{N}$, the function $t \mapsto \phi(t,j)$ is locally absolutely continuous on the interval $I^j:=\left\{t : (t,j) \in \dom \phi \right\}$, then the hybrid signal $\phi$ is said to be a {\it hybrid arc}.
\end{definition}
%
\begin{definition} [Solution to  ${\mathcal{H}}$] 
  \label{def:sols} \index{forward solution to a hybrid system} A hybrid arc $\phi : \dom \phi \to \mathbb{R}^{m_{\phi}}$ is a {\em solution} to ${\mathcal{H}}$ if  $\phi(0,0) \in \textrm{cl}(C) \cup D$; 
\begin{itemize}[topsep=0pt]               
%
\item[(S2)]  for all $j \in {\mathbb Z_{\geq 0}}$ such that $I^j_{\phi} = \left\{t : (t,j) \in \dom \phi \right\}$ has nonempty interior,  
\begin{equation*}
\begin{array}{ll}               
\phi(t,j) \in C & \mbox{for all} ~ t \in \mbox{int}(I^j_\phi), \cr
\dot{\phi}(t,j) = F(\phi(t,j)) & \mbox{for almost all} 
~ t \in I^j_\phi; \cr
\end{array} 
\end{equation*}                        
\item[(S3)] for all $(t,j) \in \dom \phi$ such that $(t,j+1)\in \dom \phi$,
\begin{equation*}       
\begin{array}{l}         
\phi(t,j) \in D, \qquad
\phi(t,j+1) = G(\phi(t,j)). \cr
\end{array}
\end{equation*}   
\ \\[-14mm]              
\end{itemize}     
\end{definition}

A solution $\phi$ to $\mathcal{H}$ is said to be maximal if there is no solution $\psi$ to $\mathcal{H}$ such that $\phi(t,j) = \psi(t,j)$ for all $(t,j) \in \dom \phi$ and $\dom \phi$ is a proper subset of $\dom \psi$. It is said to be nontrivial if $\dom \phi$ contains at least two points. It is said to be continuous if it is nontrivial and never jumps. It is said to be eventually discrete if $T := \sup_t \dom \phi < \infty$ and $\dom \phi \cap (\{T\} \times \mathbb Z_{\geq 0})$ contains at least two points. It is said to be eventually continuous if $J := \sup_j \dom \phi < \infty$ and $\dom \phi \cap (\mathbb{R}_{\geq 0} \times \{J\})$ contains at least two points.  System $\mathcal{H}$ is said to be forward complete if the domain of each maximal solution is unbounded. 

We are interested in sufficient conditions for UES of a closed set $\mathcal{A} \subset \mathcal{X}$ for a hybrid system $\mathcal{H} := (C,F,D,G)$.  This property is defined in terms of the distance of $\phi$ to the set $\mathcal A$, {\it i.e.}, $|\phi|_{\mathcal A} := \displaystyle\inf_{z\in \mathcal A} |\phi-z|$,   where $|\,\cdot\,|$ denotes Euclidean norm,  as follows---cf. \cite{goebel2012hybrid}. 
\begin{definition}[UES]\label{def:ues}
Let the closed subsets $(\mathcal{A}, \mathcal{D}) \subset \mathcal{X} \times \mathcal{X}$. The set $\mathcal{A}$ is said to be UES for $\mathcal{H}$ on $\mathcal{D}$ if there exist $\kappa$ and $\lambda>0$ such that, for each solution $\phi$ to $\mathcal{H}$ starting from $x_o \in \mathcal{D}$ at $(0,0)$,  we have 
\begin{equation} \label{eqUESbis}
|\phi(t,j)|_{\mathcal{A}} \leq \kappa |x_o|_{\mathcal{A}}  e^{{-\lambda(t+j)}} \quad \forall (t,j) \in \dom \phi.
\end{equation}
 If $\mathcal{D} = \mathcal{X}$, we say that the set 
$\mathcal{A}$ is UES for $\mathcal{H}$.
\end{definition}

\section{Integral Characterization of UES} 
\label{sec:ues}
Our first statement is an original  characterization of UES for hybrid systems, in terms of uniform $\mathcal L_p$-integrability conditions. It is reminiscent of \cite[Lemma 2]{al:TACDELTAPE} for continuous-time systems and in \cite{al:DTINTLEM} for discrete-time systems.  However, as the solutions of hybrid systems may flow and jump, we first introduce certain notations related to integration over a hybrid time domain.
\subsubsection*{Hybrid Integral} 
Consider a   function with hybrid domain  $\phi : \dom \phi \rightarrow \mathbb{R}^{n \times n}$ and let $K \in \mathbb{R}_{>0} \cup \{+ \infty\}$ and $(t,j) \in \dom  \phi$.  We use $E^{ \phi}_{t,j,K} \subset \dom  \phi$ to denote the shortest hybrid time domain, starting from $(t,j)$, of length larger or equal than $K$ and contained in $\dom  \phi$.  
Note that if $K$ is finite,  then there exists a unique $(s_K,m_K) \in \dom \phi$,  such that 
\begin{align} \label{domprop}
K \leq (s_K - t) + (m_K - j) < K+1,  
\end{align}
and a unique non-decreasing sequence 
$$ \{t_j, t_{j+1},...,t_{m_K}, t_{m_K + 1} \} ~ \text{with} ~ t_j := t ~ \text{and} ~ t_{m_K + 1} := s_K,$$ 
such that 
\begin{align*} 
 E^{ \phi}_{t,j,K}  :=  
[t_j, t_{j+1}] \times \{ j \} \cup  \cdots  \cup [t_{m_K}, t_{m_K + 1}] \times 
\{ m_K \}.
\end{align*} 
 Thus,  the hybrid integral of $ \phi$ over the domain $E^{ \phi}_{t,j,K}$ is defined as
\[  \hspace{-3pt} \int_{E^{ \phi}_{t,j,K}} \hspace{-6pt} \phi(s, i) d(s,i) := 
\sum\limits_{i=j}^{m_K} \int_{t_{i}}^{t_{i+1}} \hspace{-4pt}
 \phi (s,i) ds + \sum\limits_{i=j}^{m_K - 1}  \phi(t_{i+1},i).   \]
 In particular, for  $K = + \infty$, we have  $s_\infty + m_\infty = 
 +\infty$. 

Akin to the case where signals evolve purely in continuous or discrete time---cf. \cite{DESVID}, given a  function  $\phi$, with hybrid domain starting at $(t_o,j_o)\in \dom \phi$,  we define the hybrid $\mathcal L_p$-norm,  with $p\in [1,\infty)$,  as
\begin{equation}
  \label{lpnorm}  \big| \phi \big|_{\mathcal{A} p}  := \left[\int_{E^{\phi}_{t_o,j_o,\infty}} \hspace{-0.4cm} \big|\phi(s, i) \big|^p_{\mathcal{A}} d(s,i) \right]^{\frac{1}{p}}
\end{equation}
and the hybrid $\mathcal L_\infty$ norm,
\begin{equation}
  \label{linfnorm}  \big| \phi \big|_{\mathcal{A} \infty}  := \sup \left\{ \big| \phi (t,j) \big|_{\mathcal{A}} : (t,j) \in E^{\phi}_{t_o,j_o,\infty} \right\}. 
\end{equation}
In the case that $\mathcal A = \{0\}$ we simply write $|\phi|_p$ and $|\phi|_\infty$.


Then, the following statement generalizes \cite[Lemma 3]{al:LTVsystSCL} to the realm of hybrid systems. 

\begin{theorem}[Hybrid-integral characterization of UES] \label{prop.integ}
Consider the hybrid system $\mathcal{H} := (C,F,D,G)$, as defined in \eqref{305},  and let  $(\mathcal{A}, \mathcal{D}) \subset \mathcal{X} \times \mathcal{X}$ be closed subsets. Assume that there exist $c$ and $p>0$ such that, for each  $\phi$, solution to $\mathcal{H}$ starting from $x_o \in \mathcal{D}$,  we have 
\begin{align} \label{eqinteg}
\max \left\{ \big| \phi \big|_{\mathcal{A} \infty}, \ \big| \phi \big|_{\mathcal{A}p} \right\} \leq c \big| x_o \big|_{\mathcal{A}}.
\end{align}
Then, the set $\mathcal{A}$ is UES on $\mathcal{D}$ and \eqref{eqUESbis} holds with  $ \lambda := \frac{1}{pc^p}$ and  $\kappa := c\exp(1/p)$.
\end{theorem}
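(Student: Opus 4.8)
The plan is to carry the continuous-time argument of \cite[Lemma 3]{al:LTVsystSCL} over to the hybrid time scale, doing the flow/jump bookkeeping by hand. Fix a solution $\phi$ to $\mathcal{H}$ from $x_o\in\mathcal{D}$ and write $\mu:=|x_o|_{\mathcal{A}}$, $V(t,j):=|\phi(t,j)|^{p}_{\mathcal{A}}$, and $W(t,j):=\int_{E^{\phi}_{t,j,\infty}}V$. If $\mu=0$ then \eqref{eqinteg} gives $V\equiv 0$ along $\phi$ and \eqref{eqUESbis} is immediate, so assume $\mu>0$; then $\mu=|\phi(0,0)|_{\mathcal{A}}\le|\phi|_{\mathcal{A}\infty}\le c\mu$ also yields $c\ge 1$. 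The preliminary observation is a \emph{restarting} one: since $\mathcal{H}$ is time-invariant, for each $(s,i)\in\dom\phi$ the shifted arc $(t,j)\mapsto\phi(t+s,j+i)$ is again a solution of $\mathcal{H}$, so \eqref{eqinteg} applied to it yields
\[
  W(s,i)\ \le\ c^{p}\,V(s,i)\qquad\text{and}\qquad V(t,j)\ \le\ c^{p}\,V(s,i)\ \text{ for every }(t,j)\text{ following }(s,i)\text{ in }\dom\phi,
\]
and in particular every $W(s,i)$ is finite.

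The second step is to show that $W$ decays exponentially along the hybrid time domain at rate $1/c^{p}$. On a flow interval $[\tau_{j},\tau_{j+1}]\times\{j\}$ one has, from the definition of the hybrid integral, $W(t,j)=\int_{t}^{\tau_{j+1}}V(\sigma,j)\,d\sigma+(\text{a constant in }t)$, so $t\mapsto W(t,j)$ is absolutely continuous with $\dot W(t,j)=-V(t,j)\le-\tfrac{1}{c^{p}}W(t,j)$ a.e.\ by the first restarted bound, whence $W(t,j)\le e^{-(t-\tau_{j})/c^{p}}\,W(\tau_{j},j)$ by the comparison lemma. At a jump, unfolding the hybrid integral gives $W(\tau_{j+1},j)=V(\tau_{j+1},j)+W(\tau_{j+1},j+1)$, hence $W(\tau_{j+1},j+1)=W(\tau_{j+1},j)-V(\tau_{j+1},j)\le(1-c^{-p})\,W(\tau_{j+1},j)\le e^{-1/c^{p}}\,W(\tau_{j+1},j)$, using $c\ge 1$ and $1-x\le e^{-x}$. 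Chaining the flow and jump contractions from $(0,0)$ to an arbitrary $(t,j)\in\dom\phi$, and using $W(0,0)=|\phi|_{\mathcal{A}p}^{p}\le c^{p}\mu^{p}$, gives $W(t,j)\le c^{p}\mu^{p}\,e^{-(t+j)/c^{p}}$.

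The last step extracts the pointwise bound. Let $\delta:=c^{p}$. If $t+j<\delta$, then \eqref{eqinteg} already yields $|\phi(t,j)|_{\mathcal{A}}\le c\mu\le c\,e^{1/p}\mu\,e^{-(t+j)/(pc^{p})}$ because $(t+j)/(pc^{p})\le 1/p$, which is \eqref{eqUESbis}. If $t+j\ge\delta$, pick a sub-domain $E\subset\dom\phi$ ending at $(t,j)$ of hybrid length $\delta$ (possible since $t+j\ge\delta$), with initial point $(t',j')$, so that $t'+j'=(t+j)-\delta$; since every point of $E$ precedes $(t,j)$ we have $V\ge c^{-p}V(t,j)$ on $E$, and since the hybrid integral of a constant over $E$ equals that constant times $\delta$ while $E\subset E^{\phi}_{t',j',\infty}$,
\[
  \frac{\delta}{c^{p}}\,V(t,j)\ \le\ \int_{E}V\ \le\ W(t',j')\ \le\ c^{p}\mu^{p}\,e^{-((t+j)-\delta)/c^{p}} .
\]
Hence $V(t,j)\le\frac{c^{2p}}{\delta}\,e^{\delta/c^{p}}\mu^{p}e^{-(t+j)/c^{p}}$; the minimizer of $\delta\mapsto\delta^{-1}e^{\delta/c^{p}}$ is $\delta=c^{p}$, which gives $V(t,j)\le c^{p}e\,\mu^{p}e^{-(t+j)/c^{p}}$, i.e.\ $|\phi(t,j)|_{\mathcal{A}}\le c\,e^{1/p}|x_o|_{\mathcal{A}}\,e^{-(t+j)/(pc^{p})}$, which is \eqref{eqUESbis} with $\kappa=c\exp(1/p)$ and $\lambda=1/(pc^{p})$.

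The difficulties I anticipate are not conceptual but are matters of care. One is the restarting step: it requires \eqref{eqinteg} to be available along every shifted solution, which is immediate when $\mathcal{D}$ is forward invariant (as in the applications in this paper) and otherwise should be read as the hypothesis holding from every state reachable from $\mathcal{D}$. The other, which is where the proof must be most careful, is the construction of the backward window $E$ of prescribed hybrid length: because the jump counter is discrete, a window ending at $(t,j)$ may have to straddle several jumps and a partial flow interval, so one must work with the hybrid notion of length from \eqref{domprop} and verify that the optimal choice $\delta=c^{p}$ is realized (up to the unit granularity of the counter), which is what pins down the stated constants.
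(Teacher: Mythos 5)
Your proof is correct and follows essentially the same route as the paper's: the restart bounds obtained by shifting the solution, the tail functional $W$ (the paper's $v$), the flow/jump decay at rate $1/c^{p}$, and an integral over a window of hybrid length $c^{p}$ to extract the pointwise bound, the only cosmetic differences being that you chain the decay estimate by hand instead of invoking the hybrid comparison lemma and that you anchor the length-$c^{p}$ window backward at the point of interest rather than forward from each $(t,j)$ as the paper does. The unit-granularity caveat you flag (the backward point with $t'+j'=(t+j)-\delta$ may fall inside a jump gap, which perturbs $\kappa$ by at most a factor $e^{1/(pc^{p})}$) is the mirror image of a step the paper's own forward-window argument also glosses over, so it does not distinguish the two proofs.
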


\begin{proof}
We first remark that the solutions $\phi$ start at $(0,0)$, so the $\mathcal L$-norms in \eqref{eqinteg} are to be considered on $E^{\phi}_{0,0,\infty}$. Now, following  the proof lines of \cite[Lemma 3]{al:LTVsystSCL}, we note that condition (\ref{eqinteg}) implies that,  for all $(t,j) \in E^{\phi}_{0,0,\infty}$,\\[-1pt] 
\begin{equation}
\label{eqinteg2}
    \sup\big\{\,|\phi(s,i)|_{\mathcal{A}}^p : (s,i) \in E^{\phi}_{t,j,\infty} \,\big\} \leq c^p |\phi(t,j)|_{\mathcal{A}}^p
\end{equation}
and
\begin{equation}
\label{eqinteg3}
    \int_{E^{\phi}_{t,j,\infty}}  \hspace{-0.4cm}  \big|\phi(s, i) \big|^p_{\mathcal{A}} d(s,i) \leq c^p |\phi(t,j)|_{\mathcal{A}}^p. 
\end{equation}
Next, we define the hybrid arc $v:\dom \phi \rightarrow \mathbb{R}_{\geq 0}$ given by 
$$ v(t,j):=\int_{E^{\phi}_{t,j,\infty}}  \hspace{-0.4cm} \big|\phi(s, i) \big|^p_{\mathcal{A}} d(s,i),  $$
and we distinguish the two following cases: for all $t$ such that the solution flows, that is, if $t \in \mbox{int}(I^j_{\phi})$ with $I^j_{\phi} := \{t : (t,j) \in \dom \phi\}$, we have 
\begin{align*}
 \dot{v}(t,j)=\frac{d}{dt}\left[ \int_t^{t_{j+1}} \hspace{-0.4cm} |\phi(s,j)|_{\mathcal{A}}^p ds  \right]  
= -|\phi(t,j)|_{\mathcal{A}}^p  \leq  -\frac{1}{c^p} v(t,j);
\end{align*}
the last inequality follows from (\ref{eqinteg3}). If the solution jumps, that is, for all $(t,j) \in \dom \phi$ such that $(t,j+1) \in \dom \phi$, we have 
\begin{align*}
    v(t,j+1)-v(t,j) = -|\phi(t,j)|_{\mathcal{A}}^p \leq  \frac{-1}{c^p} v(t,j);
\end{align*}
again, the last inequality follows from (\ref{eqinteg3}).  
As a result,  using the comparison principle for hybrid systems---Lemma~\ref{lem:comparison} in the Appendix, while replacing $a$ therein by $\frac{1}{c^p}$,  we conclude that 
\begin{equation} \label{equse2}
    v(t,j) \leq e^{-\frac{t+j}{c^p}} v(0,0). 
\end{equation}
Now, we consider a parameter $K>0$ and note that, for each $(t,j) \in E^\phi_{0,0,\infty}$,
\begin{align*}
 & v(t,j) = \int_{E^{\phi}_{t,j,\infty}} \hspace{-3pt} \big|\phi(s, i) \big|^p_{\mathcal{A}} d(s,i) \geq \int_{E^{\phi}_{t,j,K}} 
\hspace{-3pt} \big|\phi(s, i) \big|^p_{\mathcal{A}} d(s,i)\\ 
& \qquad \geq  \sum\limits_{i=j}^{m_K} \int_{t_{i}}^{t_{i+1}} \hspace{-3pt}
 \big|\phi(s, i) \big|^p_{\mathcal{A}} ds + \sum\limits_{i=j}^{m_K-1}  \big|\phi(t_{i+1}, i) \big|^p_{\mathcal{A}}\\ 
& \qquad \geq 
\frac{1}{c^p}\Bigg[\sum\limits_{i=j}^{m_K} \int_{t_{i}}^{t_{i+1}} \hspace{-0.6cm}
 \sup\Big\{\big|\phi(\tau, k) \big|^p_{\mathcal{A}}: (\tau,k) \in E^{\phi}_{t,j,K}\Big\}ds \Bigg] \\ 
& \qquad +  \frac{1}{c^p}\Bigg[\sum\limits_{i=j}^{m_K-1}  \sup\Big\{\big|\phi(\tau, k) \big|^p_{\mathcal{A}}: 
 (\tau,k) \in E^{\phi}_{t,j,K}\Big\} \Bigg] \\ 
& \qquad \geq \frac{s_{K} - t +m_K - j}{c^p}   \sup\Big\{\big|\phi(\tau, k) \big|^p_{\mathcal{A}}: (\tau,k) \in E^{\phi}_{t,j,K}\Big\}  \\ & \qquad 
\geq \frac{K}{c^p}  \big|\phi(s_{K},m_K) \big|^p_{\mathcal{A}}, 
\end{align*}
where the last inequality comes from (\ref{domprop}). Then, we define $K:=c^p$ and we use (\ref{eqinteg2}) 
and \eqref{equse2} to conclude that, for each $(t,j) \in E^\phi_{0,0,\infty}$, 
\begin{align*}
\big|\phi(s_K, m_K) \big|^p_{\mathcal{A}} & \leq v(t,j)  
\leq  e^{-\frac{t+j}{c^p}} v(0,0)  \leq c^p e^{-\frac{t+j}{c^{p}}} 
\big|\phi(0,0) \big|^p_{\mathcal{A}}.
\end{align*}
The last inequality implies that, for each $ (s,i) \in \dom \phi \backslash  E^\phi_{0,0, K}$,
$$ \big|\phi(s,i) \big|_{\mathcal{A}} \leq c  e^{\frac{K}{pc^{p}}} e^{-\frac{s + i}{pc^{p}}} \big|\phi(0,0) \big|_{\mathcal{A}}.  $$
On the other hand,  for each $ (s,i) \in   E^\phi_{0,0, K}$,
\begin{align*}  
\big|\phi(s,i) \big|_{\mathcal{A}} & \leq 
c \big|\phi(0,0) \big|_{\mathcal{A}} 
 \leq   c  e^{\frac{K}{pc^{p}}}   e^{-\frac{s+i}{pc^{p}}}     
  \big|\phi(0,0) \big|_{\mathcal{A}}.
   \end{align*}
The statement follows.
\end{proof}

\section{UES and ISS for time-varying hybrid systems} \label{sec:main}

Consider the non-autonomous hybrid  system of the form
\begin{align}\label{332}
\mathcal H' : & \left\{ 
\begin{array}{ccll} 
\hspace{-.7ex} \dot \zeta  &\hspace{-1ex} = &\hspace{-1ex} F'(\zeta,t,j) &  
t \in \text{int}(I^j_A)
\\ [2pt]  
\hspace{-.7ex} \zeta^+ & \hspace{-1ex} = &\hspace{-1ex} G'(\zeta,t,j) & (t,j), (t,j+1) \in \dom A,
\end{array} \right.
\end{align}
\noindent with state $\zeta \in \mathbb{R}^{m_\zeta}$, $F'$, $G': \mathcal X \to \mathbb R^{m_\zeta}$, $\mathcal X :=  \mathbb R^{m_\zeta} \times \dom A$, and such that $A$ is a hybrid signal whose domain is $\dom A$ and $I^j_A := \{t : (t,j) \in \dom A\}$. $A$ may be an exogenous hybrid signal or may also depend on the system's hybrid trajectories---see Section \ref{sec:egs} for examples. Then, the solutions to \eqref{332} are hybrid arcs whose domain is a subset of $\dom A$. That is, the solutions of \eqref{332} jump whenever $A$ jumps.

To study the behavior of the solutions to \eqref{332}, we recast it in the form of \eqref{305},  by including the hybrid time as a bi-dimensional state variable. That is,  defining  $x:=[\xi^\top \ p\ \, q]^\top$,  system \eqref{332} can be rewritten as 
\begin{align}
\hspace{-0.2cm} \mathcal H : & \left\{ 
\begin{array}{rcll} 
\hspace{-0.2cm}
\begin{bmatrix} \,\dot \xi\, \\ \dot p \\ \dot q \end{bmatrix} &\hspace{-1ex} = &\hspace{-1ex} 
\begin{bmatrix} F'(\xi, p,q) \\ 1 \\ 0 \end{bmatrix} & x \in C
\\[17pt]
\hspace{-0.2cm}
\begin{bmatrix} 
\xi^+  \\ p^+ \\ q^+ 
\end{bmatrix} &\hspace{-1ex} = &\hspace{-1ex} 
\begin{bmatrix} 
G'(\xi,p,q) \\ p \\ q + 1 \end{bmatrix} & x \in D,
\end{array} \right.
\label{597}
\end{align}
where the flow and jump sets are,  respectively,  defined as 
$C:= \mathcal X$ and 
$D := \{ x  \in \mathcal X :  (p,q+1) \in \dom A \}$.  Then, a solution $\zeta$ to \eqref{332},  starting from the initial condition $\zeta_o  \in \mathbb{R}^{m_\zeta}$ at $(t_o,j_o) \in  \dom A$, must coincide with a solution $\phi$ to \eqref{597},  starting from the initial condition $(\xi_o, t_o, j_o)$ at $(0,0)$.   In this case,  we have $p(t,j) = t + t_o$ and $q(t,j) = j+j_o$ for all $(t,j) \in \dom \phi$. 
We use this fact in what follows of the paper to analyze time-varying hybrid systems in the form of \eqref{332}.

\begin{remark}\label{rmk3}
If the set $\{ (\xi,p,q) \in \mathcal X : \xi = 0 \} $ is UES for 
$\mathcal H$,  as per Definition \ref{def:ues},  then the origin $\{\zeta = 0\}$ is UES for $\mathcal H'$,  that is, every solution $\zeta$,  starting at $(t_o,j_o)$ from $\zeta_o$,  satisfies
\begin{equation} 
  \label{608} |\zeta(t,j)| \leq \kappa |\zeta_o| e^{{-\lambda(t+j-t_o-j_o)}} \quad \forall (t,j) \in \dom \zeta,
\end{equation}
with $\kappa$ and $\lambda$ independent of $(t_o,j_o)$. 
\end{remark}

\subsection{Problem formulation and standing hypotheses}

In the sequel, we focus on perturbed non-autonomous hybrid systems of the form---cf. Eq. \eqref{332},
\begin{equation}
  \label{606} 
\hspace{-0.2cm}  \mathcal H_\nu' :  \left\{ 
\begin{array}{ccll} 
\hspace{-.9ex} \dot{\zeta}  &\hspace{-2ex} = & \hspace{-2ex} -A(t,j)\zeta + \nu(t,j)
&   \hspace{-13ex} t \in  \text{int}(I^j_A)  
\\
\hspace{-.7ex}    \zeta^+ &\hspace{-2ex} = & \hspace{-2ex} [\,I_{m_\zeta} -B(t,j)\,]\zeta 
+ \nu(t,j) & \\ 
&&   \hspace{10ex}  (t,j), (t,j+1) \in \dom A,
\end{array} \right. 
 \end{equation}
where $A$ and $B$ (are assumed to) have the same hybrid time domain,  that is,  $A$ and $B: \dom A \rightarrow \mathbb{R}^{m_\zeta \times m_\zeta}$. Furthermore,  $\nu : \dom A \rightarrow \mathbb R^{m_\zeta}$ is an external hybrid perturbation. The index $_\nu$ in $\mathcal H_\nu$ is to distinguish the system in \eqref{606} from the unperturbed dynamics resulting from setting $\nu\equiv 0$.
\begin{remark}\label{rmk5}
This class of systems is important as it covers a number of interesting cases that appear in adaptive estimation. 
 For instance,  when $\nu\equiv 0$ and $A(t,j)$ and $B(t,j)$ are both symmetric and positive semidefinite,  system \eqref{606} generalizes the so called \textit{gradient system}, studied both in continuous and discrete time in the context of identification  \cite{AND77,brockett2000rate} and multi-agent systems \cite{9312975,JP4-SRIKANT-IJC}. The functions $A$ and $B$ may come from expressing outputs and inputs along solutions; namely,  for a system $\dot z = A_z(y,u)z$,  we let $A(t,j) := A_z(y(t,j),u(t,j))$.  This artifice is commonly used to analyze some nonlinear  observers  \cite{Besancon2007overview,989154}. See also Section \ref{sec:egs}.
\end{remark}

In what follows, we investigate sufficient conditions for the origin 
$\{\zeta =0\}$ to be UES for $\mathcal H'_0$  (that is, \eqref{606} with $\nu\equiv 0$) and for the system $\mathcal H'_\nu$ to be  ISS with respect to $\nu\not\equiv 0$. For hybrid systems, ISS means that there exist a class $\mathcal{K}_\infty$ function $\alpha$ and a class $\mathcal{K} \mathcal{L}$ function $\beta$ such that,  for each solution $\zeta$ to $\mathcal H'_\nu$,  starting from $\zeta_o \in \mathbb{R}^{m_\zeta}$,  at $(t_o,j_o) \in \dom A$,   we have  
\begin{align*} 
 |\zeta(t,j)|_{\mathcal{A}} & \leq \beta (|\zeta_o|_{\mathcal{A}}, t+j-t_o-j_o)  
\\ &
+ \alpha \left( \sup \left\{ |\nu(s,i)| :  (s,i) \in \dom \phi \backslash  E^\zeta_{t,j,\infty}  \right\} \right)   
\end{align*}
for all $(t,j) \in E^\zeta_{t_o,j_o,\infty}$.

We solve these problems under two standing hypotheses reminiscent of others that are  common in the context of  continuous-  or discrete-time systems. The first one essentially guarantees boundedness of the solutions and uniform global stability of the origin $\{ \zeta = 0 \}$  for $\mathcal H'_0$. Roughly, for  $\mathcal H'_{0}$, we require the existence of a Lyapunov function with negative semidefinite derivative along flows and non-increasing over jumps. The second Assumption imposes uniform boundedness of the matrices $A$ and $B$.  

\begin{assumption} [Lyapunov (Non-Strict) Inequalities] 
\label{ass:UGS}
There exists a symmetric matrix 
$P : \dom A \rightarrow \mathbb{R}^{m_\zeta \times m_\zeta}$ and constants $p_1,~p_2>0$ such that
$p_1 \leq  |P|_\infty \leq p_2 $.  
Furthermore, there exist symmetric positive  semi-definite matrices $Q_c,~Q_d : \dom P \rightarrow \mathbb{R}^{m_\zeta \times m_\zeta}$ such that, for all $t\in \mbox{int}(I^j_A)$,  
\begin{equation}
  \label{484}  \dot{P}(t,j) - A(t,j)^\top P(t,j) -  P(t,j) A(t,j) \leq -Q_c(t,j), 
\end{equation}
and,  for all $(t,j) \in \dom A$ such that $(t,j+1) \in \dom A$,  
\begin{align}
\nonumber
  [I_{m_\zeta} - B(t,j) ]^\top  P(t,j+1) [I_{m_\zeta} & - B(t,j)] - P(t,j)\qquad \\ 
 &\hfill  \leq   - Q_d(t,j). 
\label{491}
\end{align}
\end{assumption}

\begin{assumption} [Uniform Boundedness] \label{ass:linf}
There exist $\bar{A}$, $\bar{B}>0$ such that $|B|_\infty \leq \bar{B}$ and  $|A|_\infty \leq \bar{A}$.
\end{assumption}


In addition to Assumptions \ref{ass:UGS} and \ref{ass:linf},  we investigate the role of HUO and HPE. 

\subsection{UES and ISS under HUO}

Consider the linear system $\mathcal H'_0$, {\it i.e.}, \eqref{606} with $\nu\equiv 0$.  We introduce the hybrid transition matrix $\mathcal{M} : \dom A \times \dom A \rightarrow \mathbb{R}^{m_\zeta \times m_\zeta}$ such that,  for each $((t,j), (t_o,j_o)) \in \dom A \times \dom A$,  the solution $\zeta$ starting from $\zeta_o$ at $(t_o,j_o)$ satisfies 
\begin{equation}
  \label{560}  \zeta(t,j) = \mathcal{M}((t,j),(t_o,j_o)) \zeta_o.
\end{equation}
The hybrid transition matrix  $\mathcal{M}$ is the solution to the system  
\begin{subequations} 
\label{htm}
\begin{align} 
\label{htm:a} 
& \hspace{-0.1cm} \dot{\mathcal{M}}((t,j), (t_o,j_o)) =   -A(t,j) \mathcal{M}((t,j), (t_o,j_o)) ~~~ t \in I^j_A  
 \\ 
\nonumber  
& \hspace{-0.1cm} \mathcal{M}((t,j+1),(t_o,j_o))  =  \left[I_{m_\zeta} - B(t,j) \right] \mathcal{M}((t,j),(t_o,j_o)) 
\\ \label{htm:b}     & 
\hspace{8.em}  
(t,j), (t,j+1) \in \dom A
\\
\label{htm:c} 
& \hspace{-0.1cm} \mathcal{M}((t_o,j_o), (t_o,j_o))  = \, I_{m_\zeta}.
\end{align}
\end{subequations}

Then,  we introduce the following property.

\begin{definition}[HUO] \label{def:HUO}
The pair $(A,B)$ satisfying Assumption \ref{ass:UGS} is HUO if there exist $K$, $\mu > 0$ such that, for each $(t_o,j_o) \in \dom A$, 
\begin{align} \label{eq:HUO}
\int_{E^A_{t_o,j_o,K}}  \hspace{-20pt} \mathcal{M} 
\left((s,j), (t_o,j_o)\right)^\top & \Phi(s,j) \mathcal{M}\left((s,j), (t_o,j_o) \right) d(s,j) \nonumber  \\ & \geq \mu I_{m_\zeta}, 
\end{align}

\noindent where  $\Phi : \dom \Phi \rightarrow \mathbb{R}^{m_\zeta \times m_\zeta}$, with $\dom \Phi = \dom Q_c = \dom Q_d$,  is given by 
\begin{align} \label{eqAab}
\Phi(t,j) := 
\left\{ 
\begin{array}{ll} 
\!\! Q_c(t,j) & \text{if} ~~  t \in \text{int}(I^j_A) 
\\ 
\!\! Q_d(t,j) & \text{otherwise}.
\end{array}
\right.
\end{align}
\end{definition}


\begin{remark} \label{rmk6}
Following up on Remark \ref{rmk5},  we note that particular instances of HUO pairs pertain to 
multi-variable systems,  where $Q_c(t,j)$ and $Q_d(t,j)$ result from designing a hybrid input.   Thus,  the required HUO property may be induced (by design).  
\end{remark}

\begin{theorem}[HUO implies UES and ISS]  \label{HUO=>UES}
If for the hybrid system $\mathcal H'_0$, defined by \eqref{606} with $\nu\equiv 0$,   Assumptions \ref{ass:UGS} and \ref{ass:linf} hold,  and the pair $(A,B)$ is HUO, then the origin $\{\zeta=0\}$ is UES,  and $\mathcal H'_\nu$ is ISS with respect to $\nu$.
\end{theorem}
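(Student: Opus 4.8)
The plan is to deduce Theorem~\ref{HUO=>UES} from the integral characterization of UES, Theorem~\ref{prop.integ}, applied to the extended autonomous system $\mathcal H$ in~\eqref{597} with $\mathcal A := \{(\xi,p,q)\in\mathcal X : \xi = 0\}$, and then invoke Remark~\ref{rmk3} to transfer the conclusion to $\mathcal H'_0$. So the core task is to establish, for every solution $\phi = (\zeta,p,q)$ of the unperturbed extended system starting from $x_o$, a bound of the form $\max\{|\phi|_{\mathcal A\infty},\,|\phi|_{\mathcal Ap}\}\le c|x_o|_{\mathcal A}$ for some $p$ and $c>0$; since $|\phi|_{\mathcal A} = |\zeta|$, this is really a bound on the hybrid $\mathcal L_2$- and $\mathcal L_\infty$-norms of $\zeta$. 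I would take $p=2$ throughout.

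First I would handle the $\mathcal L_\infty$ bound and uniform global stability. Using the Lyapunov function $V(t,j) := \zeta^\top P(t,j)\zeta$ and Assumption~\ref{ass:UGS}, along flows $\dot V \le -\zeta^\top Q_c\zeta \le 0$ and across jumps $V(t,j+1)-V(t,j)\le -\zeta^\top Q_d\zeta \le 0$, so $V$ is nonincreasing along hybrid time. Combined with $p_1 I \le P(t,j)\le p_2 I$ (here I read $p_1\le|P|_\infty\le p_2$ together with symmetry and the implicit positive-definiteness needed for the inequality~\eqref{484}/\eqref{491} to be meaningful — this is a point I would want to state carefully), this yields $|\zeta(t,j)|^2 \le (p_2/p_1)|\zeta_o|^2$ for all $(t,j)$, hence the $\mathcal L_\infty$ part with constant $\sqrt{p_2/p_1}$. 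Next, the $\mathcal L_2$ part: integrating the flow inequality and summing the jump inequality over the whole hybrid time domain gives
\begin{equation*}
\int_{E^\phi_{0,0,\infty}} \zeta(s,i)^\top \Phi(s,i)\,\zeta(s,i)\,d(s,i) \;\le\; V(0,0) \;\le\; p_2|\zeta_o|^2,
\end{equation*}
where $\Phi$ is the matrix in~\eqref{eqAab}. The obstacle is that $\Phi$ need not be positive \emph{definite}, so this does not directly control $|\zeta|_2$; that is exactly where HUO enters.

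To convert the $\Phi$-weighted integral into a genuine $\mathcal L_2$ bound, I would use the transition matrix: write $\zeta(s,i) = \mathcal M((s,i),(t_o,j_o))\zeta(t_o,j_o)$ for any earlier $(t_o,j_o)$ on the domain, and apply the HUO inequality~\eqref{eq:HUO} over a window of hybrid length $K$ starting at $(t_o,j_o)$. This gives $\mu|\zeta(t_o,j_o)|^2 \le \int_{E^A_{t_o,j_o,K}} \zeta(s,i)^\top\Phi(s,i)\zeta(s,i)\,d(s,i)$, i.e. the $\Phi$-weighted energy over each $K$-window lower-bounds $\mu$ times the state energy at the start of that window. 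Chaining these windows along the whole domain, and using that the windows overlap at most boundedly (so the total $\Phi$-energy over all windows is at most a fixed multiple of the $\Phi$-energy over $E^\phi_{0,0,\infty}$, controlled by $p_2|\zeta_o|^2$ from the previous step), one sums $\mu|\zeta|^2$ sampled at window endpoints; finally, using the uniform global stability bound to interpolate between samples (the state energy on each window is within a factor $p_2/p_1$ of its value at the window start) yields $|\zeta|_2^2 \le c_2|\zeta_o|^2$. Taking $c := \max\{\sqrt{p_2/p_1},\sqrt{c_2}\}$ and applying Theorem~\ref{prop.integ} gives UES of $\mathcal A$ for $\mathcal H$, hence by Remark~\ref{rmk3} UES of the origin for $\mathcal H'_0$.

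For the ISS claim I would use the variation-of-constants/superposition structure of the linear system~\eqref{606}: the solution of $\mathcal H'_\nu$ equals the solution of $\mathcal H'_0$ plus a convolution-type term involving $\mathcal M$ and $\nu$, and UES of the unperturbed system (which gives $\|\mathcal M((t,j),(t_o,j_o))\| \le \kappa e^{-\lambda(t+j-t_o-j_o)}$) makes this term bounded by a linear function of $\sup|\nu|$ over the relevant window via a discrete-plus-continuous Gr\"onwall/geometric-series estimate; Assumption~\ref{ass:linf} is used to control the jump contributions $\nu(t,j)$ uniformly. Assembling the homogeneous $\beta(|\zeta_o|,t+j-t_o-j_o) = \kappa|\zeta_o|e^{-\lambda(t+j-t_o-j_o)}$ term and the $\alpha(\sup|\nu|)$ term with $\alpha$ linear yields the ISS estimate. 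I expect the main obstacle to be the bookkeeping in the chaining argument of the third paragraph: making precise how the $K$-windows tile the hybrid time domain $\dom A$ (whose flow/jump structure is arbitrary), bounding their overlap multiplicity, and interpolating the state norm across each window so that the discretely-sampled lower bounds assemble into a continuous $\mathcal L_2$ bound — all while tracking that $\mu$, $K$, $p_1$, $p_2$ are uniform in $(t_o,j_o)$.
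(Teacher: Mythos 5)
Your UES argument is essentially the paper's: you pass to the extended autonomous system \eqref{597}, invoke Theorem~\ref{prop.integ} with $p=2$, obtain the $\mathcal L_\infty$ bound from $V=\xi^\top P(p,q)\xi$ under Assumption~\ref{ass:UGS} (the paper also reads that assumption as $p_1 I_{m_\zeta}\le P\le p_2 I_{m_\zeta}$, so your caveat matches its intent), and use HUO plus the ratio $p_2/p_1$ to convert the $\Phi$-weighted energy into a genuine $\mathcal L_2$ bound. The only difference in that half is bookkeeping: you tile $\dom A$ with consecutive length-$K$ windows and sum the lower bounds $\mu|\zeta(\cdot)|^2$ at window starts, whereas the paper integrates the shifted inequality $V(\phi(t,j))-V(\phi(s_K,m_K))\ge \mu\,(p_1/p_2)\,|\xi(s_K,m_K)|^2$ over the whole hybrid domain, which avoids any tiling/overlap discussion and directly produces the constant $(K+1)\big[\tfrac{p_2^2}{p_1\mu}+\tfrac{p_2}{p_1}\big]$; your version closes just as well with \emph{disjoint} windows, since each window has hybrid length less than $K+1$, so $\int_{\text{window}}|\zeta|^2\le (K+1)\tfrac{p_2}{p_1}|\zeta(\text{start})|^2\le (K+1)\tfrac{p_2}{p_1\mu}\int_{\text{window}}\zeta^\top\Phi\,\zeta$, and the window energies sum to at most $p_2|\zeta_o|^2$. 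Where you genuinely diverge is the ISS part: you exploit linearity and a hybrid variation-of-constants formula, bounding the forced term through the exponential bound on $\mathcal M$ (an integral over flows plus a geometric series over jumps), whereas the paper constructs a converse quadratic function $W(x)=\xi^\top\mathbb P(p,q)\xi$ with $\mathbb P$ a Gramian of $\mathcal M$, proves $p_m I_{m_\zeta}\le \mathbb P\le p_M I_{m_\zeta}$, and concludes via Young's inequality and the hybrid comparison lemma. Your route is more elementary and in fact does not need the bound on $|I_{m_\zeta}-B|$ from Assumption~\ref{ass:linf} (your remark that this assumption controls the jump contributions of $\nu$ is a misattribution, though harmless, since it is only needed in the Lyapunov route); the paper's route, at the price of establishing the Gramian bounds, delivers an explicit ISS-Lyapunov function, which is a stronger, reusable by-product. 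Both halves of your plan are sound as sketched.
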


\begin{proof}
The stability of the origin $\{\zeta=0\}$ for $\mathcal H'_0$ may be analyzed using the framework described in Section \ref{sec:prel},  by rewriting the system as one that is time-invariant, of the form \eqref{597},  with flow and jump maps
\begin{subequations}\label{616}
\begin{align}
\label{616a} 
 F_\nu(x) \color{black} &:= [-  \xi^\top A(p,q)^\top + \nu^\top  \quad 1 \quad 0]^\top\\
\label{616b} 
 G_\nu(x)  &:= [ \xi^\top[ I_{m_\zeta}  - B(p,q)]^\top +\, \nu^\top   \quad p  \quad q+1 ]^\top, 
\end{align}
\end{subequations}
state $x := (\xi, p, q) \in \mathcal{X} :=  \mathbb{R}^{m_\zeta} \times \dom A$, and flow and jump sets defined by  $C := \mathcal{X}$ and $D := \{ x \in \mathcal{X} : (p,q+1) \in \dom A \}$, respectively.  In particular,  after Remark \ref{rmk3},  the UES bound \eqref{608} holds for $\mathcal H'_0$ if the set
\begin{align} \label{eqsetAG}
\mathcal{A} := \{ x \in \mathcal X : \xi = 0 \} 
\end{align} 
is UES (as per Definition \ref{def:ues}) for $\mathcal H$, defined by  \eqref{597}, \eqref{616}, $C$, and $D$ as defined above. Thus, to prove the first item we use Theorem \ref{prop.integ} and this equivalent time-invariant representation of $\mathcal H'_0$. That is, we explicitly compute $c>0$ such that,  along each solution $\phi$ to \eqref{597}-\eqref{616},   with $\nu \equiv 0$ and starting from $x_o := (\xi_o, t_o,j_o)$,  at $(0,0)$, it holds that 
\begin{equation}
  \label{736}  \max \left\{ \big| \phi \big|^2_{\mathcal{A} \infty},  \big| \phi \big|^2_{\mathcal{A}2} \right\} \leq c \big| x_o \big|^2_{\mathcal{A}},
\end{equation}
where $\mathcal A$ is defined in \eqref{eqsetAG}. 

To that end, we introduce the Lyapunov function candidate
\begin{equation} \label{eqn:lyap}
V(x) := \xi^{\top} P(p,q ) \xi, 
\end{equation}
where $P$ is introduced in Assumption \ref{ass:UGS}. Furthermore, after the latter, we have 
\[ \langle\nabla V(x), F_0(x)  \rangle   \leq -\xi^{\top} Q_c(p,q ) \xi,  \]
for each  $x\in C$,  while 
\[ V( G_0(x) )-V(x) \leq  -\frac{1}{2} \xi^{\top} Q_d(p,q) \xi,  \]
for each $x \in D$. Therefore, after \eqref{eqAab},   along the maximal solution $\phi$, we have 
\[ \dot{V}(\phi(t,j)) \leq - \xi(t,j)^\top \Phi(p(t,j),q(t,j)) \xi(t,j), \]
for all $t \in \mbox{int}(I^j_A)$, while
\begin{align*}
 V(\phi(t,j+1)) - & V(\phi(t,j)) \\
& \leq - \xi(t,j)^\top \Phi(p(t,j),q(t,j)) \xi(t,j)
\end{align*}
for all  $(t,j) \in \dom \phi$ such that  $(t,j+1) \in \dom \phi$. Thus, using the fact that $Q_c$ and $Q_d$ are positive definite from Assumption \ref{ass:UGS}, it follows that 
\[ V(\phi(t,j)) \leq  V(\phi(0,0)) \qquad \forall (t,j) \in E^\phi_{0,0,\infty}, \]
which implies that, for each $(t,j) \in E^\phi_{0,0,\infty}$,  we have   
\[ p_1 \big| \xi(t,j) \big|^2 \leq V(\phi(t,j)) \leq  V(\phi(0,0 )) \leq p_2 \big| \xi_o \big|^2. \]
Finally, since $|\xi| = \big| \phi \big|_{\mathcal{A}}$, we conclude that
\[ \big| \phi \big|^2_{\mathcal{A} \infty} \leq \frac{p_2}{p_1} \big| \phi(0,0 ) \big|^2_{\mathcal{A}}.  \]
This establishes the first bound in \eqref{736}. 

Next, we compute the second bound. To that end, we follow the proof steps of  \cite[Proposition 1]{loria2009adaptive}.  
Let the HUO property generate $K >0$ and, for each $(t,j)\in\dom \phi$, a unique pair $(s_K, m_K)\in \dom \phi$ satisfying \eqref{domprop}. We have 
\begin{align*}
 V(\phi(t,j))\! & - \!  V(\phi(s_K,m_K)) \geq  \int_{E^{\phi}_{t,j,K}}  \hspace{-0.5cm} \xi(s,i)^\top \Phi(s,i) \xi(s,i) d(s,i).
\end{align*}

The hybrid arc $\xi(s,i)$, with $(s,i) \in E^{\phi}_{t,j,K}$, starting at $(t, j)$ coincides with $\zeta(s,i)$ starting at $(t + t_o, j + j_o)$. Therefore, the relation
\begin{align*}
& \zeta(s + t_o, i + j_o)  =   
\\ & 
\mathcal M\big((s \!+\! t_o, i\! +\! j_o),(t \!+\! t_o, j \!+\! j_o)\big) 
\zeta\big((t \!+\! t_o, j \!+\! j_o)\big),
\end{align*}
which holds under \eqref{560}, implies that
\begin{align*}
& \xi(s,i)  = 
\overline{\mathcal M}((s,i),(t,j))
\xi(t,j),
\end{align*}
where 
$\overline{\mathcal M}((s,i),(t,j)) := \mathcal M((s + t_o, i + j_o),(t + t_o, j + j_o))$. 
As a result, we obtain 
\begin{align*}
& V(\phi(t,j)) - V(\phi(s_K,m_K)) \geq \\ & \xi(t,j)^\top \int_{E^{\phi}_{t,j,K}}  \hspace{-14pt} \overline{\mathcal M}((s,i),(t,j))^\top  \Phi(s,i)  \widebar{\mathcal M}((s,i),(t,j)) d(s,i) 
 \\ & \times  \xi(t,j) \geq \mu \big| \xi(t,j) \big|^2.
\end{align*}
Next, we use the fact that 
$$ \big| \xi(t,j) \big|^2 \geq \frac{p_1}{p_2} \big| \xi(s_K,m_K) \big|^2, $$
to obtain 
\begin{align*}
 V(\phi(t,j)) - V(\phi(s_K,m_K))  \geq  \mu \frac{p_1}{p_2} \big| \xi(s_K,m_K) \big|^2.
\end{align*}
Now, integrating on both sides over $E^{\phi}_{0,0,\infty}$, and using the fact that
$$ |\xi(s,i)|^2 = |\phi(s, i)|^2_{\mathcal{A}} \qquad \forall (s,i) \in E^\phi_{0,0,\infty}, $$ 
we obtain 
\begin{align*}
\int_{ E^{\phi}_{0,0,K}} \hspace{-3pt} V(\phi(s,i)) d(s,i) & \geq \frac{p_1 \mu}{p_2}  
\int_{ E^{\phi}_{0,0,\infty}}  \hspace{-3pt} \big|\phi(s, i) \big|^2_{\mathcal{A}} d(s,i)
\\ &
- \frac{p_1 \mu}{p_2} \int_{ E^{\phi}_{0,0,K}} 
\hspace{-3pt}
\big|\phi(s, i) \big|^2_{\mathcal{A}} d(s,i),
\end{align*}
which, in turn, implies that
\begin{align*}
\int_{E^{\phi}_{0,0,\infty}} \hspace{-4pt} \big|\phi(s, i) \big|^2_{\mathcal{A}} & d(s,i) 
  \leq \frac{p_2}{p_1 \mu} \int_{E^{\phi}_{0,0,K}} \hspace{-4pt} V(\phi(s,i)) d(s,i)  
\\ & \hspace{4em}+ \int_{E^{\phi}_{0,0,K}} 
\hspace{-4pt}
\big|\phi(s, i) \big|^2_{\mathcal{A}} d(s,i) 
\\ &
\qquad \leq (K+1) \left[ \frac{p_2^2 }{p_1 \mu} + \frac{p_2 }{p_1} 
 \right] \big|\phi(0,0) \big|^2_{\mathcal{A}}. 
\end{align*}
This completes the proof of UES.

Next, to prove ISS of $\mathcal H'_\nu$ with respect to $\nu$,  we introduce the function $W : \mathcal X \to \mathbb R_{\geq 0}$ given by 
$$ W(x) := \xi^\top  \mathbb P(p,q) \xi,  $$  
 where 
$$ \mathbb P(p,q) := \int_{E^A_{p,q,\infty}} \hspace{-5pt}  \left[ \mathcal{M}((s,i),(p,q))  \mathcal{M}((s,i),(p,q))^\top \right] d(s,i),  $$
and we prove the following claim. 

\begin{claim}
Under UES of the set $\mathcal{A}$ for $\mathcal{H}'_0$, the fact that $\mathcal{M}((p,q),(p,q)) = I_{m_\zeta}$---see \eqref{htm:c}, and $A$ being bounded, we conclude that there exist $p_M \geq p_m > 0$ such that 
\begin{align} \label{eqpropre}   p_m I_{m_\zeta}  \leq \mathbb P(p,q) \leq p_M I_{m_\zeta}  \qquad \forall (p,q) \in \dom A.  
  \end{align}
\end{claim}
\vskip 7pt
\begin{proof}
The upper bound in \eqref{eqpropre} is a straightforward consequence of UES
of the set $\mathcal{A}$ for $\mathcal{H}'_0$ and the definition of the hybrid transition matrix $\mathcal{M}$. 

To prove the lower bound, we consider the following complementary cases: 
\begin{enumerate}
\item If $(p,q) \in \dom A$ and $(p,q+1) \in \dom A$, we conclude that
$$ \mathbb{P}(p,q) = I_{m_\zeta} + \mathbb{P}(p,q+1) \geq I_{m_\zeta}. $$ 
\item If $([p,p+1],q) \in 
\dom A$, we conclude that  
\begin{align*} 
\mathbb P(p,q) & \geq \int^{p+1}_{p} \hspace{-5pt}  \left[ \mathcal{M}((s,q),(p,q)) \mathcal{M}((s,q),(p,q))^\top \right] ds.   
\\ & 
\geq \int^{p+1}_{p}  e^{(-2 \bar{A} (s-p))}  ds I_{m_\zeta} = \int^{1}_{0} e^{-2 \bar{A} s} ds I_{m_\zeta}.
\end{align*}
To obtain the second inequality, we used \eqref{htm:a} and boundedness of the matrix $A$. 

\item If $([p,p+\lambda],q) \in \dom A$ and $(p+\lambda,q+1) \in \dom A$, for some $\lambda \in (0,1)$. In this case, we have 
\begin{align*} 
\mathbb P(p,q) & \geq \mathcal{M}((p+\lambda,q),(p,q))  \mathcal{M}((p+\lambda,q),(p,q))^\top
\\ & \geq  e^{-2 \bar A \lambda} I_{m_\zeta} \geq e^{-2 \bar A} I_{m_\zeta} . 
\end{align*}
\end{enumerate}\vskip -13pt
\end{proof}
  
Furthermore, using \eqref{htm:a} and \eqref{htm:b} along maximal trajectories, we conclude that  
$$ \dot{\mathbb P}(p,q) =  - I_{m_\zeta} - A(p,q)^\top \mathbb P(p,q) - \mathbb P(p,q) A(p,q) $$    
for all $t\in \mbox{int}(I^j_A)$,  while 
$$  \mathbb P(p,q) = I_{m_\zeta} + B(p,q)^\top \mathbb P(p,q+1) B(p,q) $$
for all  $(p,q) \in \dom A$ such that  $(p,q+1) \in \dom A$. 
Therefore, 
\begin{align*}
\langle \nabla W(x), F_0(x)  \rangle & = - |\xi|^2, 
\qquad  \forall\, x \in C\\
 W(G_0(x))  - W (x) &= -  |\xi|^2,\qquad  \forall\, x \in D, 
\end{align*}
where $F_\nu$ and $G_\nu$ are defined in \eqref{616}. 
In turn, 
\begin{align*}
& \hspace{-0.8cm}  \langle \nabla W(x),   F_\nu(x)  \rangle   =
- | \xi |^2  + 2  \xi^\top\mathbb P(p,q)  \nu
\\ & \leq - \frac{1}{2} |\xi|^2 +  4 p_M  
|\nu|^2 \leq - \frac{1}{2 p_M}W(x) +  4 p_M  |\nu|^2,
\end{align*}
for all  $x \in C$, where the first inequality follow from Young's inequality, and 
\begin{align*}
W(G_\nu(x) ) - W (x) = -  |\xi|^2  +  W(G_\nu(x)) -W(G_0(x)),
\end{align*}
for all  $x \in D$. On the other hand, after Assumption \ref{ass:linf}, there exists $b_M'$ such that $|I_{m_\zeta} - B(p,q)| \leq b_M'$ for all $(p,q)\in \dom A$, one gets using Young's inequality that 
\begin{align*}
W(G_\nu(x))  -  W(G_0 (x))  
 & = 2 \nu^\top \mathbb P (p,q+1) [I_{m_\zeta} - B(p,q)] \xi  \\ 
& \quad + \nu^\top \mathbb P (p,q+1) \nu \\ 
& \leq c |\nu|^2 + \frac{1}{2}  |\xi|^2,
\end{align*}
where $c:= p_M + p_M^2 b_M'^2$. In turn, 
\begin{align*}
W(G_\nu(x)) -W(x) = - \frac{1}{2 p_M} W(x)  + c|\nu|^2. 
\end{align*}
So, along the system's trajectories, we have 
\[ \dot W(\phi(t,j)) \leq - \frac{1}{2 p_M}W(\phi(t,j)) +  4 p_M  |\nu(t,j)|^2, \]
for all  $t\in \mbox{int}(I^j_\phi)$, and  
\begin{align*}
\hspace{-0.4cm}W(\phi(t,j+1)) & -W(\phi(t,j)) 
\\ &
\leq - \frac{1}{2 p_M} W(\phi(t,j))  + c |\nu(t,j)|^2,  
\end{align*} 
for all   $(t,j) \in \dom \phi$ such that  $(t,j+1) \in \dom \phi$.
Finally,  we introduce 
$$  \bar\nu := \sup \left\{ |\nu(s,i)| :  (s,i) \in \dom \phi \backslash  E^\phi_{t,j,\infty}  \right\},  $$
and the comparison perturbed hybrid system
\begin{align*}
\hspace{-0.2cm} 
\mathcal H_w : & \left\{
\begin{array}{rcll} 
\hspace{-0.2cm}
\begin{bmatrix} \,\dot w\, \\ \dot p \\ \dot q \end{bmatrix} &\hspace{-1ex} = &\hspace{-1ex} 
\begin{bmatrix} 
f'(w,\bar{\nu}) \\ 1 \\ 0 
\end{bmatrix} & x \in C_w
\\[17pt]
\hspace{-0.2cm}
\begin{bmatrix} 
w^+  \\ p^+ \\ q^+ 
\end{bmatrix} &\hspace{-1ex} = &\hspace{-1ex} 
\begin{bmatrix} 
g'(w,\bar{\nu}) \\ p \\ q + 1 \end{bmatrix} & x \in D_w,
\end{array} \right.
\end{align*}
 with 
\begin{align*}
f'(w,\bar \nu)  := &  - \displaystyle\frac{1}{2 p_M} w +  4 p_M  \bar\nu^2   \qquad w \in C_w   
\\[3pt]  
g'(w,\bar \nu) := &  \Big[1  - \displaystyle\frac{1}{2 p_M}\Big]  w  
+ c \bar\nu^2  \qquad w \in D_w,
\end{align*}
where $C_w := \mathbb R_{\geq 0} \times \dom \phi$ and  
$D_w := \{ (w,p,q) \in \mathbb R_{\geq 0} \times \dom \phi \,:\, (p,q+1) \in \dom \phi \}$.
Thus, we conclude using Lemma \ref{lem:comparison} that the solutions $\phi_w$ to $\mathcal H_{w}$ and $\phi$ to  $\mathcal H$ in \eqref{597} obtained from \eqref{606} satisfy $W(\phi(t,j))  \leq \phi_w(t,j)$ for all  $(t,j)\in \dom \phi$, solving for  $\mathcal H_{w}$, it follows that there exist $a$, $b>0$ such that 
$$  W(\phi(t,j)) \leq  W(\phi(0,0)) e^{- a(t+j)}  + b \bar\nu, $$ 
and ISS of $\mathcal H_\nu'$ follows.
\end{proof} 

\subsection{UES and ISS Under HPE} 

The following is a relaxed PE property, which captures the richness of signals that may fail to be PE if considered as functions of purely continuous or purely discrete time.

\begin{definition} [HPE] \label{def:hpe}
The pair $(A,B)$ of hybrid arcs $A, B: \dom A \rightarrow \mathbb{R}^{m_\zeta \times m_\zeta}$, {\it i.e.},  with $\dom A = \dom B$,  is said to be HPE if there exist $K$ and $\mu > 0$ such that
\begin{align} \label{eq:HPE}
\int_{E^A_{t_o,j_o,K}}  \hspace{-0.6cm}  \Phi_{AB}(s,i) d(s,i) \geq \mu I_{m_\zeta}
\qquad \forall (t_o,j_o) \in \dom A, 
\end{align}
where  $\Phi_{AB} : \dom A \rightarrow \mathbb{R}^{m_\zeta \times m_\zeta}$ is given by 
\begin{align*}
\Phi_{AB}(t,j) := 
\left\{ 
\begin{array}{cl} 
A(t,j) & \text{if} ~~  t \in \text{int}(I^j_A) 
\\ 
B(t,j) & \text{otherwise}.
\end{array}
\right.
\end{align*}
\ \vskip -10pt
\end{definition}

As for purely continuous-time systems, an important property of HPE is that it implies HUO. Theorem \ref{thm:PE=>UO}, below, generalizes to the realm of hybrid systems, the well-known fact that PE implies UO---see \cite{CLAN8,IOASUN}. Yet, Theorem \ref{thm:PE=>UO} is not a direct extension since its proof approach is original. For instance, it differs from that used in \cite{anderson1969new} for continuous-time systems by being direct and not relying on many intermediate results. 

\begin{assumption}[Structural Properties] \label{ass:str}
 For each $(t,j) \in \dom A$,   $ A(t,j) = A(t,j)^\top \geq 0$,   $B(t,j) = B(t,j)^\top \geq 0$,  and $|B(t,j)|_{\infty} \leq 1$.
\end{assumption}
\begin{theorem}[HPE implies HUO]  \label{thm:PE=>UO}
Consider the hybrid system $\mathcal H'_0$ under Assumptions \ref{ass:linf}  and \ref{ass:str},  and let the  pair $\left( A,B \right)$ be  HPE. Then,  the  pair $\left( A,B \right)$ is HUO. 
\end{theorem}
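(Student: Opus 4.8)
The plan is, via a convenient choice of the matrices in Assumption~\ref{ass:UGS}, to reduce the HUO inequality \eqref{eq:HUO} to an estimate on the hybrid transition matrix $\mathcal M$ that follows from the HPE inequality \eqref{eq:HPE} by a hybrid adaptation of the classical Anderson-type argument (cf.\ \cite{anderson1969new}). First I would observe that, under Assumption~\ref{ass:str}, Assumption~\ref{ass:UGS} holds with $P(t,j) \equiv I_{m_\zeta}$, $Q_c := A$ and $Q_d := B$: indeed $\dot P - A^\top P - P A = -2A \leq -A$ since $A \geq 0$, and $[I_{m_\zeta}-B]^\top P [I_{m_\zeta}-B] - P = B^2 - 2B \leq -B$ since $0 \leq B \leq I_{m_\zeta}$ (because $B = B^\top \geq 0$ and $|B|_\infty \leq 1$, hence $B^2 \leq B$). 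With this choice the weighting matrix $\Phi$ of Definition~\ref{def:HUO} coincides with $\Phi_{AB}$ of Definition~\ref{def:hpe}, so it suffices to produce $\mu' > 0$, independent of $(t_o,j_o)$, such that $\int_{E^A_{t_o,j_o,K}} \mathcal M((s,i),(t_o,j_o))^\top \Phi_{AB}(s,i) \mathcal M((s,i),(t_o,j_o))\, d(s,i) \geq \mu' I_{m_\zeta}$, with the same window length $K$ furnished by HPE.

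Next I would fix $(t_o,j_o) \in \dom A$ and $x \in \mathbb R^{m_\zeta}$ and let $\eta$ be the maximal solution of $\mathcal H'_0$ from $x$ at $(t_o,j_o)$, so that $\eta(s,i) = \mathcal M((s,i),(t_o,j_o))x$ by \eqref{560} and $\dom\eta$ coincides with $\dom A$ from $(t_o,j_o)$ onward. Integrating $\dot\eta = -A\eta$ over the flow intervals and telescoping the jump relation $\eta^+ = [I_{m_\zeta}-B]\eta$ gives the hybrid variation-of-constants identity $x - \eta(s,i) = \int \Phi_{AB}(\tau,k)\eta(\tau,k)\, d(\tau,k)$, where the integral runs over the portion of $E^A_{t_o,j_o,K}$ joining $(t_o,j_o)$ to $(s,i)$; hence $|x - \eta(s,i)| \leq \int_{E^A_{t_o,j_o,K}} |\Phi_{AB}(\tau,k)\eta(\tau,k)|\, d(\tau,k)$ for every $(s,i)$ in the window. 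Since $A,B \geq 0$, $|A|_\infty \leq \bar A$ and $|B|_\infty \leq 1$, one has the pointwise bound $|\Phi_{AB}\eta| \leq c_1 (\eta^\top \Phi_{AB}\eta)^{1/2}$ with $c_1 := \max\{\bar A^{1/2},1\}$, and Cauchy--Schwarz with respect to the hybrid measure (Lebesgue on flow arcs plus counting measure on jump times), whose total mass on $E^A_{t_o,j_o,K}$ is $(s_K - t_o)+(m_K-j_o) < K+1$ by \eqref{domprop}, yields $|x - \eta(s,i)|^2 \leq c_1^2 (K+1)\, \mathcal I$, with $\mathcal I := \int_{E^A_{t_o,j_o,K}} \eta^\top \Phi_{AB}\eta\, d(\tau,k)$.

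It then remains to invoke HPE. By \eqref{eq:HPE}, $\mu|x|^2 \leq x^\top \big( \int_{E^A_{t_o,j_o,K}} \Phi_{AB}\, d(\tau,k) \big) x = \int_{E^A_{t_o,j_o,K}} x^\top \Phi_{AB}(\tau,k) x\, d(\tau,k)$. Writing $x = \eta + (x-\eta)$ and using $\Phi_{AB} \geq 0$ together with $|\Phi_{AB}|_\infty \leq c_2 := \max\{\bar A, 1\}$ gives the pointwise bound $x^\top \Phi_{AB} x \leq 2\eta^\top \Phi_{AB}\eta + 2 c_2 |x-\eta|^2$; integrating over the window and using the preceding estimate, $\mu|x|^2 \leq 2\mathcal I + 2 c_1^2 c_2 (K+1)^2 \mathcal I$. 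Therefore $\mathcal I \geq \mu' |x|^2$ with $\mu' := \mu / \big(2(1 + c_1^2 c_2 (K+1)^2)\big)$, and since $\mathcal I = x^\top \big( \int_{E^A_{t_o,j_o,K}} \mathcal M(\cdot)^\top \Phi_{AB}(\cdot)\, \mathcal M(\cdot)\, d(s,i) \big) x$ holds for every $x$, the HUO inequality \eqref{eq:HUO} follows with window $K$ and constant $\mu'$, both independent of $(t_o,j_o)$.

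The step I expect to demand the most care is the bookkeeping of the hybrid integral --- deriving the variation-of-constants identity by telescoping flows and jumps, and applying Cauchy--Schwarz against the mixed Lebesgue/counting measure --- rather than any deep analytical difficulty; I also stress that the bound $|B|_\infty \leq 1$ in Assumption~\ref{ass:str} is exactly what makes the jump contributions behave like the flow contributions (it gives both $|B\eta| \leq (\eta^\top B\eta)^{1/2}$ and $B^2 \leq B$), and that the key conceptual move is the choice $P \equiv I_{m_\zeta}$, $Q_c = A$, $Q_d = B$ that identifies $\Phi$ with $\Phi_{AB}$.
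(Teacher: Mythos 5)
Your proof is correct, and it rests on the same two pillars as the paper's argument: the reduction via $P = I_{m_\zeta}$, $Q_c = A$, $Q_d = B$ (so that $\Phi$ in \eqref{eq:HUO} coincides with $\Phi_{AB}$ and HUO becomes a quadratic-form bound along solutions of $\mathcal H'_0$), and the hybrid variation-of-constants representation of the solution, which is exactly \eqref{eqSol}. Where you differ is in the execution, and your version is leaner. The paper expands $\zeta(s,i)^\top\Phi_{AB}(s,i)\zeta(s,i)$ from below around the frozen vector $\zeta_o$, interval by interval, using the parameterized inequality $|a-b|^2\geq\frac{\rho}{1+\rho}|a|^2-\rho|b|^2$, per-interval Cauchy--Schwarz estimates and counting bounds to obtain \eqref{eqVFcond}--\eqref{eqVGcond}, and then closes the loop by summing over $j$, invoking HPE, and optimizing over $\rho$ at the very end. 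You run the comparison in the opposite direction: you upper-bound the frozen form $x^\top\Phi_{AB}x$ (whose window integral HPE bounds from below) by $2\eta^\top\Phi_{AB}\eta + 2c_2|x-\eta|^2$, and control $\int|x-\eta|^2$ by a single Cauchy--Schwarz against the mixed Lebesgue/counting measure, whose total mass on $E^A_{t_o,j_o,K}$ is below $K+1$ by \eqref{domprop}. This eliminates the $V_F/V_G$ bookkeeping and the free parameter $\rho$, and yields an explicit $\mu'$ depending only on $(\mu,K,\bar A)$, which is precisely what uniformity in $(t_o,j_o)$ demands; the auxiliary facts you use ($M^2\leq \|M\| M$ for symmetric $M\geq 0$, hence $B^2\leq B$, $|B\eta|\leq(\eta^\top B\eta)^{1/2}$ and $|A\eta|^2\leq\bar A\,\eta^\top A\eta$) are exactly where Assumptions \ref{ass:str} and \ref{ass:linf} enter in the paper as well. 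The only implicit step worth stating is that the solution (equivalently the transition matrix $\mathcal M$) is defined on the whole window $E^A_{t_o,j_o,K}$, which follows from linearity and the boundedness of $A$ in Assumption \ref{ass:linf} and is likewise taken for granted in the paper.
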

\begin{proof}
Under Assumption \ref{ass:str}, it follows that Assumption \ref{ass:UGS} holds with $P = I_{m_\theta}$,  $Q_c(t,j) = A(t,j)$,  and $Q_d(t,j) = B(t,j)$. Therefore, to verify the HUO property, it suffices to find $\mu_o > 0$ such that,  for each $(t_o,j_o) \in \dom A$,  we have  
\begin{equation}
  \label{eqtoshow}
\int_{E^A_{t_o,j_o,K}}  \hspace{-0.8cm} \Gamma_o(s,j) 
d(s,j)  
 \geq \mu_o I_{m_\zeta}, 
\end{equation}
where we defined 
$$
\Gamma_o(s,j):=\mathcal{M} \left((s,j), (t_o,j_o) \right)^\top  \Phi_{AB}(s,j) \mathcal{M}\left((s,j), (t_o,j_o) \right)
$$
to compact the notation,   $K$ comes from the HPE of $(A,B)$.
Then, to establish  \eqref{eqtoshow},  we show that,  for each $\zeta_o \in \mathbb{R}^{m_\zeta}$,  
$$
\zeta_o^\top  \int_{E^A_{t_o,j_o,K}} \hspace{-0.5cm}
\Gamma_o(s,j) d(s,j)\,  \zeta_o
\geq \mu_o |\zeta_o|^2. $$
To that end, first we note that 
\begin{equation*}
\begin{aligned} 
\zeta_o^\top  \int_{E^A_{t_o,j_o,K}}  \hspace{-0.9cm}
\Gamma_o(s,j) d(s,j)\,  \zeta_o
=     \int_{E^A_{t_o,j_o,K}}  \hspace{-0.9cm} \zeta(s,j)^\top   \Phi_{AB}(s,j) \zeta(s,j) d(s,j)
\end{aligned}
\end{equation*}
and  we proceed to find $\mu_o> 0$ such that 
\begin{equation}
  \label{eqUES}  
 \tilde V  := \int_{E^A_{t_o,j_o,K}}  \hspace{-0.6cm} \zeta(s,j)^\top   \Phi_{AB}(s,j) \zeta(s,j) d(s,j) 
  \geq   \mu_o  |\zeta_o|^2.
\end{equation}
So, to prove \eqref{eqUES},  we express $\tilde{V}$ as 
 \begin{equation}  \label{eqDV}    
 \begin{aligned}
\tilde{V} = \sum\limits_{j=j_o}^{m_K} 
 V_F (j) + \sum\limits_{j=j_o}^{m_K}  V_G (j),
 \end{aligned}
 \end{equation}
 where 
 \begin{align}
 V_F (j) & :=  \int^{t_j + 1}_{t_j} \hspace{-0.3cm}  \zeta(s,j)^\top  A(s,j) 
 \zeta(s,j) ds,  \label{eqVF}
\\
 V_G (j) & :=   \zeta(t_{j+1},j)^\top  B(t_{j+1},j) 
 \zeta(t_{j+1},j),  \label{eqVG}
 \end{align} 
and we compute suitable lower bounds for these functions. 
\\
In regards to $V_F$, we show that, for each $\rho>0$  and for each $j \in \{j_o,\ldots,m_K\}$, 
 \begin{equation} \label{eqVFcond}
 \begin{aligned}
 & V_F (j)  \geq   \frac{\rho}{1+\rho}\int_{t_{j}}^{t_{j+1}} \Big| A(s,j)^{\frac{1}{2}} \zeta_o \Big|^2 ds \\ & 
 -  \rho  (\bar{A}^2 + 2 \bar{A})(2(j-j_o)+1)(t_{j+1}-t_j)(t_{j+1}-t_{j_o}+1) \tilde{V}. 
  \end{aligned}
  \end{equation}
To better see this, we note that
 \begin{equation} \label{eqSol}
 \begin{aligned}
\hspace{-0.4cm} \zeta(t,&\, j) = \zeta_o - \int_{t_j}^t A(s,j) \zeta(s,j) ds    \\ 
\hspace{-0.4cm} &- \sum\limits_{i=j_0}^{j-1} \int_{t_{i}}^{t_{i+1}}  \hspace{-0.4cm} A(s,i) \zeta(s,i) ds 
- \sum\limits_{i=j_0}^{j-1} B(t_{i+1},i) \zeta(t_{i+1},i).
 \end{aligned}
 \end{equation}
Hence,  we obtain 
 \begin{align*}
 V_F (j) =   \int_{t_{j}}^{t_{j+1}} & 
 \Big|\, A(s,j)^{\frac{1}{2}} \zeta_o   + A(s,j)^{\frac{1}{2}}\! \int_{t_{j}}^{s} A(u,j) \zeta(u,j) du   \\ &  
+  A(s,j)^{\frac{1}{2}} \sum\limits_{k=j_o}^{j-1}\int_{t_{k}}^{t_{k+1}}A(u,k) \zeta(u,k)du  
\\ &   
   + A(s,j)^{\frac{1}{2}} \sum\limits_{k=j_o}^{j-1}B(t_{k+1},k) \zeta(t_{k+1},k)  \,\Big|^2 ds.
\end{align*}
 Next, using the fact that $ |a-b|^2 \geq \frac{\rho}{1+\rho}|a|^2-\rho |b|^2$ for all $\rho >0, $ we obtain
 \begin{align*}
 V_F(j)  \geq  -  \rho\int_{t_{j}}^{t_{j+1}} &\Big|\, A(s,j)^{\frac{1}{2}} \int_{t_{j}}^s A(u,j) \zeta(u,j) du  
  \\ &   + A(s,j)^{\frac{1}{2}} \sum\limits_{k=j_o}^{j-1} \int_{t_{k}}^{t_{k+1}} A(u,k) \zeta(u,k) du 
 \\ &   + A(s,j)^{\frac{1}{2}} \sum\limits_{k=j_o}^{j-1} B(t_{k+1},k) \zeta(t_{k+1},k) \Big|^2 ds \\ & 
 + \frac{\rho}{1+\rho}  \int_{t_{j}}^{t_{j+1}} \Big| A(s,j)^{\frac{1}{2}} \zeta_o \Big|^2 ds.
 \end{align*}

 Furthermore, using the boundedness of $A$ according to Assumption \ref{ass:linf} and the fact that 
 $ |\sum\limits_{i=1}^N a_i|^2 \leq N  \sum\limits_{i=1}^N|a_i|^2, $ we obtain
 
  \begin{align*}
V_F (j)  \geq 
-  \rho \bar{A}(2( & j-j_o)+1)  \times  \int_{t_{j}}^{t_{j+1}} 
 \left[ \bigg|\int_{t_{j}}^s A(u,j) \zeta(u,j) du \bigg|^2 
 \right. \\ &  \left. + \sum\limits_{k=j_o}^{j-1} \left| \int_{t_{k}}^{t_{k+1}}A(u,k) \zeta(u,k)du \right|^2  \right. \\ &  \left. + \sum\limits_{k=j_o}^{j-1} \left|B(t_{k+1},k) \zeta(t_{k+1},k) \right|^2 \right]ds 
 \\ & + \frac{\rho}{1+\rho}\int_{t_{j}}^{t_{j+1}} \left| A(s,j)^{\frac{1}{2}} \zeta_o \right|^2 ds.
 \end{align*}
 Next, using the triangular and Cauchy-Schwartz inequalities, we obtain\\[-15pt] 
 \begin{align*}
 V_F (j)  \geq  -  \rho & \bar{A}(2(  j-j_o)+1)  \\ & 
 \times  \int_{t_{j}}^{t_{j+1}} 
\bigg[ (s-t_j) \int_{t_{j}}^s \left| A(u,j) \zeta(u,j)\right|^2 du  \\ &  
 +  \sum\limits_{k=j_o}^{j-1} (t_{k+1}-t_k) \int_{t_{k}}^{t_{k+1}}  \left| A(u,k) \zeta(u,k) \right|^2 du  
 \\[-3.5pt] & 
+  \sum\limits_{k=j_o}^{j-1}\left|B(t_{k+1},k) \zeta(t_{k+1},k) \right|^2 \bigg] ds  \\ & + \frac{\rho}{1+\rho} \int_{t_{j}}^{t_{j+1}} \left| A(s,j)^{\frac{1}{2}} \zeta_o \right|^2 ds.
 \end{align*}
 Then, using the fact that  
 \begin{align*}
 0 & \leq   (s-t_j)\int_{t_{j}}^s \left| A(u,j) \zeta(u,j)\right|^2 du   
\\ &  \leq (t_{j+1}-t_j)\int_{t_{j}}^{t_{j+1}} \left| A(u,j) \zeta(u,j)\right|^2 du,  
 \end{align*}
 we obtain
 \begin{align*}
  V_F (j)  \geq   -  \rho \bar{A}(2( & j-j_o)+1)(t_{j+1}-t_j)(t_{j+1}-t_{j_o}+1) \\  
 \times & \left[\sum\limits_{k=j_o}^{j} \int_{t_{k}}^{t_{k+1}}  \left| A(s,k) \zeta(s,k) \right|^2 ds 
 \right. \\  & \qquad \left. 
 + \sum  \limits_{k=j_o}^{j-1} \left|B(t_{k+1},k) \zeta(t_{k+1},k) \right|^2 \right] 
 \\ & + \frac{\rho}{1+\rho}\int_{t_{j}}^{t_{j+1}} \left| A(s,j)^{\frac{1}{2}} \zeta_o \right|^2 ds,
 \end{align*}
and using  $A  = A^{\frac{1}{2}} A^{\frac{1}{2}}$    and  $B  = B^{\frac{1}{2}} B^{\frac{1}{2}}$,  we conclude that 
 \begin{align*}
 V_F (j)   \geq &  -  \rho \bar{A}( \bar{A}+2)(2(j-j_o)+1)(t_{j+1}-t_j) 
 \\ & \times (t_{j+1}-t_{j_o}+1) 
 \left[\sum\limits_{k=j_o}^{j} 
 \int_{t_{k}}^{t_{k+1}}  \left| A^{\frac{1}{2}}(s,k) \zeta(s,k) \right|^2 \!ds 
 \right. \\ & \left. + \frac{1}{2} \sum\limits_{k=j_o}^{j-1}\left|B(t_{k+1},k)^{\frac{1}{2}} \zeta(t_{k+1},k) \right|^2 \right] \\ &
 + \frac{\rho}{1+\rho}\int_{t_{j}}^{t_{j+1}} \left| A(s,j)^{\frac{1}{2}} \zeta_o \right|^2 ds   .
 \end{align*}
Hence, \eqref{eqVFcond} follows. Next, we show that, for each $\rho > 0$  and for each $j \in \{j_o,\ldots,m_K\}$,  the following inequality holds:
\begin{align}\nonumber
 V_G (j)  \, \geq\, \frac{-\rho}{2} (2(j & -j_o)+1) (\bar{A}(t_{j+1}-t_{j_o})+2) \tilde{V} \\ &
 + \frac{\rho/2}{1+\rho}\left| B(t_{j+1},j)^\frac{1}{2} \zeta_o \right|^2, 
  \label{eqVGcond}
 \end{align}
For this, we first note that
$ 2 V_G (j)  =   \left|B^{\frac{1}{2}}(t_{j+1},j) \zeta(t_{j+1},j)\right|^2. $
Then, using \eqref{eqSol}, we obtain 
 \begin{align*}
  V_G(j)  =  \frac{1}{2}\,
  \bigg|\, & B(t_{j+1},j)^{\frac{1}{2}} \zeta_o  \\ & 
  + B(t_{j+1},j)^{\frac{1}{2}} \sum\limits_{k=j_o}^{j}\int_{t_{k}}^{t_{k+1}}\hspace{-3pt} A(u,k)  \zeta(u,k)du  \\ &   + B(t_{j+1},j)^{\frac{1}{2}} \sum\limits_{k=j_o}^{j-1}B(t_{k+1},k) \zeta(t_{k+1},k) \,\bigg|^2.
   \end{align*}
Now, using the fact that  $ |a-b|^2 \geq \frac{\rho}{1+\rho}|a|^2-\rho |b|^2$ for all $\rho >0$,  we obtain 
 \begin{align*} 
 V_G (j)   \geq  - \frac{\rho}{2} \bigg| B & (t_{j+1},j)^{\frac{1}{2}} \sum\limits_{k=j_o}^{j} \int_{t_{k}}^{t_{k+1}}
  \hspace{-3pt} A(u,k) \zeta(u,k)du   \\ &   
  + B^{\frac{1}{2}}(t_{j+1},j) \sum\limits_{k=j_o}^{j-1}B(t_{k+1},k) \zeta(t_{k+1},k) \bigg|^2  \\ & 
  +  \frac{\rho/2}{1+\rho} \Big| B(t_{j+1},j)^\frac{1}{2} \zeta_o \Big|^2.
 \end{align*}
\\
 Next, using $ |\sum\limits_{i=1}^N a_i|^2 \leq N  \sum\limits_{i=1}^N|a_i|^2, $  the boundedness of $B$ by $1$, and the Cauchy-Schwartz inequality, we obtain
 \begin{align*}
& V_G (j) \geq  \frac{-\rho}{2}(2(j-j_o)+1)   \\ &
 \times \left[ \sum\limits_{k=j_o}^{j}  \int_{t_{k}}^{t_{k+1}} \hspace{-0.4cm} (t_{k+1}-t_k)  \left| A(u,k) \zeta(u,k)\right|^2du 
 \right. \\ & \left. 
 + \sum\limits_{k=j_o}^{j-1} \left|B^{\frac{1}{2}}(t_{j+1},j) \zeta(t_{k+1},k)\right|^2\right]  + \frac{\rho/2}{1+\rho}\left| B(t_{j+1},j)^\frac{1}{2} \zeta_o \right|^2.
 \end{align*}
 Finally, using the boundedness of $A$, according to Assumption~\ref{ass:linf}, we obtain   
  \begin{align*}
 V_G (j)  \geq  & \, \frac{-\rho}{2} (2(j-j_o)+1)(\bar{A}(t_{j+1}-t_{j_o})+2)\qquad\qquad \\ & \qquad
 \times  \left[ \sum\limits_{k=j_o}^{j}   \int_{t_{k}}^{t_{k+1}}  \left|A(u,k)^{\frac{1}{2}} \zeta(u,k)\right|^2 du   \right. \\ & \quad \left. \qquad\quad
 + \sum\limits_{k=j_o}^{j-1}  \frac{1}{2}\left|B(t_{j+1},j)^{\frac{1}{2}}
 \zeta(t_{j+1},j)\right|^2\right] \\ &
\quad  + \frac{\rho/2}{1+\rho}\left| B(t_{j+1},j)^\frac{1}{2} \zeta_o \right|^2.
 \end{align*}
Hence,  \eqref{eqVGcond} follows. 
Now, combining \eqref{eqVFcond} and \eqref{eqVGcond}, we obtain the following upper bound on $\tilde{V}$ for each $\rho>0$: 
 \begin{align*}
&  \tilde{V} \geq  \frac{\rho}{1+\rho}  \sum\limits_{j=j_o}^{m_K-1} \left| B(t_{j+1},j)^\frac{1}{2} \zeta_o \right|^2 \qquad
 \\ & 
 + \frac{2\rho}{1+\rho} \sum\limits_{j=j_o}^{m_K} \int_{t_{j}}^{t_{j+1}} \left| A(s,j)^{\frac{1}{2}} \zeta_o \right|^2 ds 
 \\ & 
 - \frac{\rho}{2} \tilde{V}   \sum\limits_{j=j_o}^{m_K} (2(j-j_o)+1)(\bar{A}(t_{j+1}-t_{j_o})+2) 
 \\ & 
 - \rho \bar{A} (\bar{A}+2) \tilde{V} \hspace{-0.1cm} \sum\limits_{j=j_o}^{m_K}  \hspace{-0.1cm} (2(j-j_o)+1)(t_{j+1}-t_j)(t_{j+1}-t_{j_o}+1). 
 \end{align*}
 Hence,
 \begin{align*}
& \tilde{V}   \geq \frac{2\rho |\zeta_o|^2}{1+\rho}   
\int_{E^A_{t_o,j_o,K}}  \hspace{-0.7cm}  \Phi_{AB}(s,i) d(s,i) -\rho(m_K-j_o+1)^2  \tilde{V} 
 \\ & \hspace{-0.6cm} \times   \left[\frac{\bar{A}}{2}(s_K - t_{j_o})+2) + \bar{A} (\bar{A}+1)(s_K - t_{j_o})(s_K - t_{j_o}+1) \right] 
 \end{align*}
\\
 Finally, using the HPE of the pair $(A,B)$,  we conclude that
 \begin{align}\nonumber
 \tilde{V}  \geq  \frac{2\rho \mu}{1+\rho}&  |\zeta_o|^2
- \rho(K+2)^2 \\ & \hspace{-4em} \times    \left[\frac{\bar{A}}{2}(K+1)+1+\bar{A}(\bar{A}+2)(K+1)(K+2)\right]\tilde{V}.
\label{toto} 
\end{align}
Thus, \eqref{eqUES} follows by choosing 
 $$ \rho := \frac{1/(K+2)^2}{\frac{\bar{A}}{2}(K+1)+1+\bar{A}(\bar{A}+2)(K+1)(K+2)}. $$
 \vskip -13pt
\end{proof}

The importance of Theorem \ref{thm:PE=>UO} relies on the following statement,  whose proof is direct and, yet, it  generalizes similar results available for continuous- or discrete-time systems.

\begin{theorem}[UES + ISS under HPE]  \label{thm:PE=>UES}
Consider the hybrid system $\mathcal H'_0$, defined by \eqref{606} with $\nu\equiv 0$, under Assumptions \ref{ass:linf} and \ref{ass:str}, and let the pair $\left(A,B \right)$ be HPE.   Then, the origin $\{\zeta=0\}$  is UES for $\mathcal H'_0$ and $\mathcal H'_\nu$ is ISS with respect to $u$.  
\end{theorem}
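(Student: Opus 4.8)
The plan is simply to chain the two preceding theorems: once one observes that Assumption \ref{ass:str} is a particular case of Assumption \ref{ass:UGS}, the implication HPE $\Rightarrow$ HUO (Theorem \ref{thm:PE=>UO}) and the statement HUO $\Rightarrow$ UES $+$ ISS (Theorem \ref{HUO=>UES}) become simultaneously applicable, and the result follows by composition.

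First I would verify that, under Assumption \ref{ass:str}, Assumption \ref{ass:UGS} holds with the constant choice $P \equiv I_{m_\zeta}$ (so that $p_1 = p_2 = 1$), together with $Q_c := A$ and $Q_d := B$. Along flows, $\dot P(t,j) - A(t,j)^\top P(t,j) - P(t,j) A(t,j) = -2 A(t,j) \leq -A(t,j) = -Q_c(t,j)$, since $A(t,j) = A(t,j)^\top \geq 0$. Over jumps, $[I_{m_\zeta} - B(t,j)]^\top P(t,j+1) [I_{m_\zeta} - B(t,j)] - P(t,j) = B(t,j)^2 - 2 B(t,j) = -B(t,j)\bigl(2 I_{m_\zeta} - B(t,j)\bigr)$, and because $B(t,j) = B(t,j)^\top \geq 0$ with $|B(t,j)|_\infty \leq 1$ one has $0 \leq B(t,j) \leq I_{m_\zeta}$; hence $B(t,j)$ and $I_{m_\zeta} - B(t,j)$ are commuting positive semidefinite matrices, so $B(t,j)\bigl(2 I_{m_\zeta} - B(t,j)\bigr) = B(t,j) + B(t,j)\bigl(I_{m_\zeta} - B(t,j)\bigr) \geq B(t,j) = Q_d(t,j)$. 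This is exactly the remark already invoked at the start of the proof of Theorem \ref{thm:PE=>UO}; note also that with this identification the weighting matrix $\Phi$ of Definition \ref{def:HUO} coincides with the matrix $\Phi_{AB}$ of Definition \ref{def:hpe}.

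Second, with Assumptions \ref{ass:linf} and \ref{ass:str} in force and the pair $(A,B)$ HPE, Theorem \ref{thm:PE=>UO} yields that $(A,B)$ is HUO. Third, Assumption \ref{ass:UGS} (just established), Assumption \ref{ass:linf} (assumed), and the HUO property of $(A,B)$ (just obtained) are precisely the hypotheses of Theorem \ref{HUO=>UES}; applying it gives that the origin $\{\zeta = 0\}$ is UES for $\mathcal H'_0$ and that $\mathcal H'_\nu$ is ISS with respect to the perturbation $\nu$, which is the assertion.

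I do not expect any genuine obstacle: the statement is a corollary of Theorems \ref{thm:PE=>UO} and \ref{HUO=>UES}. The only point demanding (minimal) care is the matrix inequality $B(t,j)\bigl(2 I_{m_\zeta} - B(t,j)\bigr) \geq B(t,j)$, equivalently $B(t,j)\bigl(I_{m_\zeta} - B(t,j)\bigr) \geq 0$, which holds because $B(t,j)$ and $I_{m_\zeta} - B(t,j)$ are simultaneously diagonalizable positive semidefinite matrices; everything else is bookkeeping of hypotheses.
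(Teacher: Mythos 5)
Your proposal is correct and follows exactly the paper's own route: observe that Assumption \ref{ass:str} yields Assumption \ref{ass:UGS} with $P \equiv I_{m_\zeta}$, $Q_c = A$, $Q_d = B$, then chain Theorem \ref{thm:PE=>UO} (HPE $\Rightarrow$ HUO) with Theorem \ref{HUO=>UES} (HUO $\Rightarrow$ UES and ISS). Your explicit check of the jump inequality via $B(t,j)\bigl(I_{m_\zeta}-B(t,j)\bigr) \geq 0$ is a detail the paper merely asserts, but it does not change the argument.
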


\begin{proof}
After Theorem \ref{thm:PE=>UO},  the HUO property holds.  Under Assumption \ref{ass:str}, it follows that Assumption \ref{ass:UGS} holds with $P = I_{m_\theta}$,  $Q_c(t,j) = A(t,j)$,  and $Q_d(t,j) = B(t,j)$. Thus, the statement follows from a direct application of Theorem \ref{HUO=>UES}. 
\end{proof}

\section{Adaptive Estimation under HPE} \label{sec:egs} 

\subsection{The Hybrid gradient-descent algorithm}

To put our contributions in perspective,  
we first consider a classical identification problem,  based on the linear regression model 
\begin{equation} \label{190}
y = \psi^\top\theta, 
\end{equation}
where $\psi : \dom \psi \rightarrow \mathbb{R}^{m_\theta}$ is the regressor,  $\theta \in \mathbb{R}^{m_\theta}$ is a constant vector of unknown parameters, and $y : \dom y \rightarrow \mathbb{R}$ is the output. Usually, the domains of $y$ and $\psi$ are considered to be subsets of the real numbers or the natural numbers.  Then, an estimate of $\theta$, denoted $\hat \theta$, may be carried out dynamically, in function of the tracking error $ e := \hat{y} - y $, where $\hat y := \psi^\top\hat\theta$.   A well-known identification law is based on the minimization of the cost $J(e) := (1/2) e^2$ and defined by the gradient of the latter. 
 
In the continuous-time setting, {\it i.e.}, if the regressor's domain is $\dom \psi = [0, + \infty)$, the gradient-based update law for $\hat \theta$ is given by $\dot{\hat{\theta}}=-\gamma\nabla_{\hat{\theta}}J(e)$, where $\nabla_{\hat{\theta}} J$ denotes the gradient of $J = (1/2) (\psi^\top\hat \theta - \psi^\top\theta)^2$ with respect to $\hat{\theta}$. Hence,
 \begin{equation} \label{eqn:algo1}
\dot{\hat\theta}= - \gamma\psi(t)[\psi(t)^{\top}\hat{\theta}-y(t)],\quad \gamma>0
 \end{equation}
---see  \cite{narendra2012stable}.  In this case,  the dynamics of the estimation error $\tilde\theta:= \hat\theta - \theta$ is given by 
\begin{equation} \label{eqn:algo1bis}
\dot{\tilde\theta} = -\gamma \psi(t) \psi(t)^\top \tilde\theta
\end{equation}
and it is well-known (see, {\it e.g.}, \cite{Narendra}) that, if $\psi$ is bounded, the following condition of continuous-time PE (CPE) is necessary and sufficient for UES of the origin for \eqref{eqn:algo1bis}. 
\begin{enumerate}[topsep=0pt,label={(CPE)},leftmargin=*]
\item \label{item:c2}  There exist $T > 0$ and $\mu > 0$ such that
\begin{equation}
  \label{922}  \int_{t}^{t + T} \hspace{-0.2cm} \psi(s) \psi(s)^{\top} ds \geq \mu I_{m_\theta} \qquad \forall t \geq 0.
\end{equation}
 \end{enumerate}
Moreover, a lower bound on the convergence rate is provided in \cite{brockett2000rate}, \cite{AND77}, and \cite{al:LTVsystSCL},  and a strict Lyapunov function is constructed in \cite{JP4-SRIKANT-IJC}.  

In the discrete-time setting,  {\it i.e.},  if the regressor's domain is $\dom \psi = \mathbb Z_{\geq 0} $, the gradient algorithm is given by
 \begin{equation} \label{eqn:algo1b}
     \hat{\theta}(t+1)=\hat{\theta}(t)-\sigma(t) \nabla_{\hat{\theta}} J(e),
\end{equation}
 where $\sigma : \mathbb Z_{\geq 0} \rightarrow [0,1]$ is given by $\sigma(t) := \frac{\gamma}{1 + \gamma |\psi(t)|^{2}}$, and $\gamma > 0$ is the adaptation rate \cite{tao2003adaptive}. Therefore, the dynamics of the estimation error is given by 
\begin{equation} \label{eqn:algo1bisd}
\tilde\theta^{+} = \left( I_{m_\theta} - \frac{\gamma \psi(t) \psi(t)^\top}{1 + \gamma |\psi(t)|^{2}} \right) \tilde\theta.
\end{equation}
 In the latter case, the discrete-time PE condition reads---cf. \cite{bai-sastry_DT-PE,ASTBOH}:
\begin{enumerate}[topsep=0pt,label={(DPE)},leftmargin=*]
 \item \label{item:c3} There exist $N > 0$ and $\mu >0$ such that
\begin{equation}
  \label{939}  \sum_{s = 0}^{N} \psi(s) \psi(s)^{\top} \geq \mu I_{m_\theta}.  
\end{equation}
\end{enumerate}

\begin{remark}
   Note that some of the existing approaches to analyze \eqref{eqn:algo1bis} translate naturally to the analysis of \eqref{eqn:algo1bisd} under DPE; see, {\it e.g.}, \cite{tao2003adaptive}.   Other relaxed forms of PE are also available,  but these do not lead to uniform forms of convergence---see {\it e.g.}, \cite{MORNAR,praly2017convergence,chowdhary_IJC2014}.  
\end{remark}

Even though PE as defined above,  in continuous {\it or} discrete time,  is necessary for UES,   in some simple cases it may be over-restrictive.  For instance,  when the data $(\psi,y)$ of the linear regression model \eqref{190} is hybrid; namely,  when it is allowed to exhibit both continuous- and  discrete-time evolution,  to have  
\begin{align} \label{h-i-omodel}
y(t,j) = \psi(t,j)^\top \theta \qquad (t,j) \in \dom \psi. 
\end{align}
In this case,  the classical gradient-descent algorithms recalled above are ineffective.  This is because the continuous-time update law \eqref{eqn:algo1}  exploits the data only on the time intervals on which they evolve continuously,  while the discrete-time gradient algorithm \eqref{eqn:algo1b} exploits the data only at discrete time instants. If, in contrast to this, the regressor is hybrid,   we design a  hybrid gradient-descent algorithm in a way that whenever the data 
$(\psi,y)$ {\it jump}, {\it i.e.}, undergo an instantaneous change,  $\hat{\theta}$ is updated via \eqref{eqn:algo1b}; whenever the data $(\psi,y)$ {\it flow}, {\it i.e.}, evolve continuously, $\hat{\theta}$ is updated via \eqref{eqn:algo1}. More precisely, 
\begin{enumerate}[label={(HG\arabic*)},leftmargin=*]
\item \label{item:HG2} when $\psi$ flows, that is, for all $t\in \mbox{int}(I^j_\psi)$, with $I^j_\psi := \{t : (t,j) \in \dom \psi\}$,  $\hat{\theta}$ is updated  by
\begin{equation*}
\dot{\hat{\theta}} = -\gamma\psi(t,j) [\,\psi(t,j)^{\top}\hat{\theta}(t,j)-y(t,j)\,].
\end{equation*}
\item \label{item:HG1} 
Alternatively, when $\psi$ jumps, that is, for all $(t,j) \in \dom \psi$ such that $(t,j+1) \in \dom \psi$,  the estimate $\hat{\theta}$ is updated using
\begin{equation*}
\hspace{-1.2cm} \hat{\theta}(t,j+1)=\hat{\theta}(t,j)-\frac{\gamma \psi(t,j)[\,\psi(t,j)^{\top}\hat{\theta}(t,j)-y(t,j)\,]}{1 + \gamma |\psi(t,j)|^{2}}.
\end{equation*}
\end{enumerate} 
 Then,  the dynamics of the parameter estimation error $\tilde\theta = \hat{\theta} - \theta$ is governed by the hybrid system $\mathcal H'_0$,  \eqref{606}, with $\zeta := \tilde\theta$, $\nu\equiv 0$, 
 \begin{align}
\label{1419}   A(t,j) &:= \gamma \psi(t,j) \psi(t,j)^\top, \\
\label{1420}   B(t,j) &:= - \displaystyle\frac{\gamma \psi(t,j) \psi(t,j)^{\!\top}\!\!\!}{1 + \gamma |\psi(t,j)|^{2}},  
 \end{align}

\noindent which  satisfy the structural properties in Assumption \ref{ass:str}. Furthermore, it is assumed that, by design, there exists $\bar{\psi}>0$ such that $|\psi|_\infty \leq \bar{\psi}$  holds and the pair $(A,B)$ is HPE. 
  
In the following example,  we illustrate a scenario,  where the regressor $\psi$ in  \eqref{h-i-omodel} is a hybrid signal.

\begin{example} [Regressor gathering  real-time and old data] \label{exp1}
Consider the continuous-time input-output model 
\begin{equation} \label{190+}
y_1(t) = \psi_1(t)^\top\theta  \qquad t \geq 0, 
\end{equation}
where $\psi_1 : \mathbb{R}_{\geq  0} \rightarrow \mathbb{R}^{m_\theta}$ is the input and $y_1 :  \mathbb{R}_{\geq  0} \rightarrow \mathbb{R}$ is the output.   The pair $(\psi_1,y_1)$  defines the  \textit{real-time} input-output data.  On the other hand,  we assume that we have a memory containing a pair of \textit{old} input-output data,  which we denote by $(\psi_2,y_2)$.   The old data needs to be treated at specific times defining the sequence  $\{t_1, t_2,... , t_J \} \subset \mathbb{R}_{\geq 0}$  with  $t_j \leq t_{j+1}$. 
As a result,  the old input-output data satisfy
\begin{equation} \label{191}
y_2(t_j) = \psi_2(t_j)^\top\theta  \qquad \forall 
j \in \{1,2,... ,J\}.  
\end{equation}

The incorporation of old data can be done periodically,  it can also be triggered by an external supervisory algorithm.  
As a result,  we introduce  the hybrid time domain 
$$  \dom \psi := [0,t_1] \times \{ 0 \} \cup [t_1, t_2] \times \{ 1 \}  \cup ...  \cup [t_J, +\infty) \times \{J \}.  $$
Furthermore,  we introduce the pair of hybrid 
input-output data,  gathering both old and 
real-time data, given by 
$$ 
\psi(t,j)  :=  \left\{ 
\begin{matrix} 
\psi_1(t) &  \text{if} ~  t \in \text{int}(I^j_\psi) 
\\
\psi_2(t_{j+1}) & \text{otherwise},
\end{matrix}
\right.   $$
and
$$ 
y(t,j)  := \left\{ 
\begin{matrix} 
y_1(t) & \text{if} ~ t \in \text{int} (I^j_\psi) 
\\
y_2(t_{j+1}) &  \text{otherwise}.
\end{matrix}  
\right.  
$$
The pair of hybrid input-output data is related to the parameter 
$\theta$ according to \eqref{h-i-omodel}. 

The hybrid gradient algorithm in this context allows to continuously explore real-time data on the open intervals $\text{int}(I^j_\psi)$,  $j \in \{1,2,...,J\}$,  and to discretely exploit old data over the sequence of times  $\{t_j \}^\infty_{j=1}$.  
\end{example}

\begin{remark}
When the hybrid arc $\psi$ is eventually continuous (respectively, eventually discrete or Zeno),  HPE of the pair $(A,B)$ reduces to CPE of $\psi \psi^\top$ (respectively,  DPE).  Furthermore, when the regressor $\psi$ is scalar (i.e, $m_\theta = 1$),  HPE of the pair $(A,B)$ implies that either CPE or DPE holds.  However, in the general case  that $m_\theta > 1$, it is possible that HPE hold,  but none of the conditions  CPE and DPE  be  satisfied.  
\end{remark}

 For illustration, let us consider the linear relationship in \eqref{h-i-omodel},  where the regressor function $\psi$ is given by
\begin{align}  \label{261}
\hspace{-0.2cm} \psi(t) := 
\left\{  \begin{matrix}
 [\sin(t) \quad 0]^{\top} & \hspace{-0.2cm}
 \forall t \in (2j\pi,2(j+1)\pi), ~ j \in \mathbb Z_{\geq 0} 
 \\[3pt]
[0.5 \quad 1]^{\top} &   \text{otherwise},
\end{matrix}
\right.
\end{align}
so,  over successive continuous intervals of time,  
\begin{equation*}  
\psi(t) \psi(t)^{\top} = 
\begin{bmatrix}  \sin(t)^2 & 0 \\[2pt] 0 & 0 \end{bmatrix}
 \forall t \in (2j\pi,2(j+1)\pi),  ~  j \in \mathbb Z_{\geq 0}, 
\end{equation*}
while at discrete instants,
\[ 
\psi(t) \psi(t)^{\top} := \begin{bmatrix} 0.25 & 0.5 \\ 0.5 & 1 \end{bmatrix}  \qquad \forall t \in \{2\pi, 4\pi, ... \}.  
\]
The function $\psi \psi^\top$ defined in \eqref{261} does not satisfy neither CPE nor DPE. However, the corresponding maps $A$ and $B$ (with  $\gamma=1$), defined on 
$\dom A = \dom B = \bigcup^\infty_{j = 0} [2j\pi, 2(j+1)\pi], $
are given by 
\begin{align*}  
A(t,j)  & =
\begin{array}{cl}
 \begin{bmatrix} 
\sin(t)^2 & 0 \\ 0 & 0
\end{bmatrix},
\end{array}
B(t,j)  =  
\begin{array}{cl}
\hspace{-1ex} 
\begin{bmatrix} 
0.1111 & 0.2222 
\\ 
0.2222 & 0.4444 
\end{bmatrix}.  \\
\end{array}
\end{align*}
The pair $(A,B)$ is HPE,  with  $K=2\pi+1$ and $\mu=0.21$.

 HPE is less conservative than its counterparts CPE and DPE as it captures  the fact that the richness of a signal may be enhanced by an appropriate mingling of exciting flows and jumps, which, otherwise, are insufficient to guarantee that neither \eqref{922} nor \eqref{939} hold.  The latter being necessary,  $\tilde\theta\not\to 0$ in either case, but  $\tilde\theta\to 0$ under the hybrid gradient-descent algorithm---see Fig. \ref{fig:hybrid} below.
\begin{figure}[h!]
	\begin{center}
          \includegraphics[width=87mm]{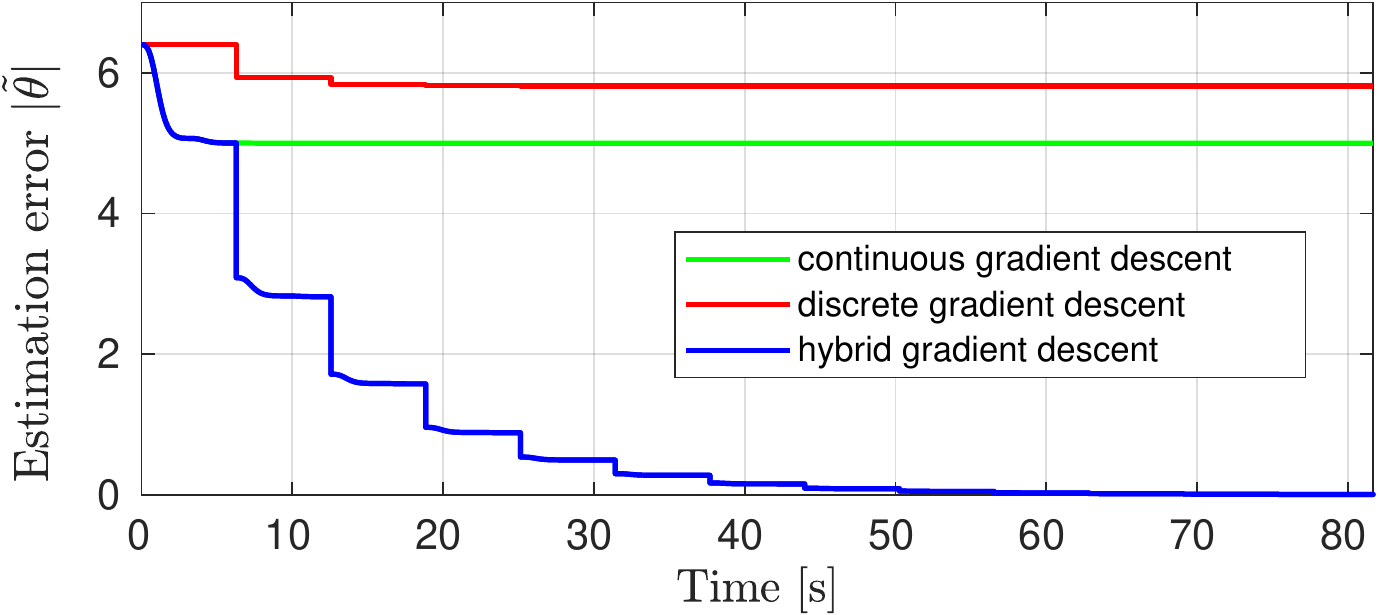} \\
	\end{center}
	\caption{Evolution of the norm of the parameter error $\tilde\theta$ using continuous, discrete,  and hybrid gradient algorithms}
	\label{fig:hybrid}	
\end{figure}
 The previous observations are captured in the following statement that follows as a direct corollary of Theorem \ref{thm:PE=>UES} and yet, covers the gradient-based algorithms for purely continuous- or discrete-time systems,  and stems as a corollary of Theorem \ref{thm:PE=>UES}. 
\begin{corollary} \label{cor1}
Consider the hybrid system $\mathcal H'_0$, defined by \eqref{608} with $A$ and $B$ as in \eqref{1419} and \eqref{1420},  respectively.  Let $\nu\equiv 0$ and assume that the pair $(A,B)$ is HPE and,  there exists $\bar{\psi}>0$ such that $|\psi|_\infty \leq \bar{\psi}$.  
Then,  the origin $\{\tilde\theta = 0\}$  is UES and the system's trajectories satisfy \eqref{608} with computable parameters 
$\kappa(\bar{\psi},\mu,K)$ and $\lambda(\bar{\psi},\mu,K)$.
\end{corollary}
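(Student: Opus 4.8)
The plan is to recognize the error dynamics of the hybrid gradient algorithm as an instance of \eqref{606} to which Theorem \ref{thm:PE=>UES} applies, and then to trace the stability constants through the proofs of Theorems \ref{thm:PE=>UO}, \ref{HUO=>UES}, and \ref{prop.integ} so as to exhibit their dependence on $\bar\psi$, $\mu$, and $K$.

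First I would verify the hypotheses of Theorem \ref{thm:PE=>UES}. Here $\zeta := \tilde\theta$, $\nu\equiv 0$, and $A$, $B$ are as in \eqref{1419}--\eqref{1420}: the matrix $A(t,j) = \gamma\,\psi(t,j)\psi(t,j)^\top$ is symmetric and positive semidefinite, while $B(t,j) = \gamma\,\psi(t,j)\psi(t,j)^\top/(1+\gamma|\psi(t,j)|^2)$ is symmetric, positive semidefinite, and, since $|\psi|_\infty\le\bar\psi$, satisfies $|B(t,j)| \le \gamma\bar\psi^2/(1+\gamma\bar\psi^2) < 1$; hence Assumption \ref{ass:str} holds. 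The same bound gives $|A|_\infty \le \gamma\bar\psi^2 =: \bar A$ and $|B|_\infty \le 1 =: \bar B$, so Assumption \ref{ass:linf} holds, and $(A,B)$ is HPE by hypothesis. Theorem \ref{thm:PE=>UES} thus applies and the origin $\{\tilde\theta = 0\}$ is UES for $\mathcal H'_0$ (the ISS conclusion of that theorem is available too but is not needed since $\nu\equiv 0$). By Remark \ref{rmk3}, UES of the set $\{\xi = 0\}$ for the time-invariant lift \eqref{597} of $\mathcal H'_0$ yields the bound \eqref{608}, with $\kappa$ and $\lambda$ independent of $(t_o,j_o)$.

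It remains to display the dependence $\kappa(\bar\psi,\mu,K)$, $\lambda(\bar\psi,\mu,K)$, obtained by unwinding the constants along the proof of Theorem \ref{thm:PE=>UES}. Under Assumption \ref{ass:str}, Assumption \ref{ass:UGS} holds with $P = I_{m_\theta}$ (so $p_1 = p_2 = 1$), $Q_c = A$, $Q_d = B$, and $\Phi = \Phi_{AB}$. The proof of Theorem \ref{thm:PE=>UO} then produces the HUO constant $\mu_o = \rho\mu/(1+\rho)$, with $\rho$ the constant, explicit in $\bar A = \gamma\bar\psi^2$ and $K$, fixed at the end of that proof, and with HUO window equal to the HPE window $K$. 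Feeding $p_1 = p_2 = 1$ and this $\mu_o$ into the computation of $c$ in the proof of Theorem \ref{HUO=>UES} gives $c = (K+1)(1+1/\mu_o)$ in \eqref{736}, so that $\max\{|\phi|_{\mathcal A\infty},\,|\phi|_{\mathcal A2}\} \le \sqrt{c}\,|x_o|_{\mathcal A}$. Applying Theorem \ref{prop.integ} with $p = 2$ and constant $\sqrt{c}$ then returns $\lambda = 1/(2c)$ and $\kappa = \sqrt{c}\,e^{1/2}$, both functions of $\bar\psi$, $\mu$, and $K$ alone.

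Since the statement is a straightforward specialization, there is no genuine obstacle; the only point requiring care is the composition of the three constant-propagation steps (Theorem \ref{thm:PE=>UO} $\to$ Theorem \ref{HUO=>UES} $\to$ Theorem \ref{prop.integ}), checking that at each stage the dependence is retained solely through $\bar\psi$, $\mu$, and $K$ --- in particular, that the HUO window inherited from HPE introduces no hidden dependence on the solution or on $(t_o,j_o)$, and that the $\mathcal L_\infty$ contribution, equal to $p_2/p_1 = 1$, does not dominate $c$.
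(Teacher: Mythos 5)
Your proposal is correct and follows essentially the same route as the paper, which simply invokes Theorem \ref{thm:PE=>UES} after noting that \eqref{1419}--\eqref{1420} satisfy Assumptions \ref{ass:linf} and \ref{ass:str} under the bound $|\psi|_\infty\le\bar\psi$; your explicit tracing of the constants through Theorems \ref{thm:PE=>UO}, \ref{HUO=>UES}, and \ref{prop.integ} is consistent with those proofs and correctly yields $\kappa$, $\lambda$ as functions of $\bar\psi$ (through $\bar A=\gamma\bar\psi^2$), $\mu$, and $K$. (You also rightly read $B$ in \eqref{1420} without the spurious minus sign, which is what the jump map in \eqref{606} and Assumption \ref{ass:str} require.)
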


\subsection{Hybrid adaptive Observer/Identifier design}

We address now a classical problem of adaptive observer/identifier design, recast in the realm of hybrid systems.  We show how well-known designs may be applied using excitation signals that  flow and jump.  For clarity, we start by revisiting an existing  design-method  for continuous- and discrete-time systems. 

\subsubsection{Rationale}

Consider the problem of estimating the state $x \in \mathbb R^{m_x}$,  and a vector of unknown constant parameters $\theta \in \mathbb R^{m_\theta}$,  of a nonlinear system driven by an input $u \in \mathbb R^{m_u}$,  based on the measurement of an output $y := Hx$,  $y\in \mathbb R^{m_y}$. 
%
That is, to produce estimates $(\hat x,\hat \theta)$ such that the estimation errors $(e,  \tilde\theta) := (x- \hat x,\hat\theta-\theta)$ converge to zero asymptotically.

 If the output $y$ is measured continuously in time, indexed by the variable $t$,  the plant may be modeled by 
\begin{equation} \label{1304} 
\dot x = A_c(y,u) x  + \Psi_c(y,u) \theta,
\end{equation}
whereas,  if the output measurements are made at discrete instants,  indexed by $j$, we shall use the model
\begin{equation}
  \label{1304bis} 
   x^+ = A_d(y,u) x  + \Psi_d(y,u)  \theta.
\end{equation}
%

This choice of models is motivated by the abundant literature on estimator design for systems that are affine in the unmeasured variable $x$ and linear in $\theta$. The problem is completely solved for continuous- as well as for discrete-time systems. However, available designs may fail if the system's dynamics is hybrid. Say, if it is  governed by \eqref{1304} when $x$ belongs to a flow set $C \subset \mathbb{R}^{m_x} \times \mathbb{R}_{\geq 0} \times \mathbb{N}$, and by \eqref{1304bis},  when $x$ is in a jump set $D \subset  \mathbb{R}^{m_x} \times \mathbb{R}_{\geq 0} \times \mathbb{N}$.
More precisely, a more general model of the plant is the hybrid system
\begin{equation}
\begin{aligned} 
\Sigma_P : & \left\{ 
\begin{matrix}
\begin{matrix} 
\dot{x}\ =A_c(y,u) x + \Psi_c(y,u) \theta 
\end{matrix}
 & 
x  \in  C
\\[3pt]
\begin{matrix}
x^+ \!= A_d(y,u) x + \Psi_c(y,u) \theta  
\end{matrix}
&\   x  \in  D. 
\end{matrix} \right.
\end{aligned}
\label{SigmaP}
\end{equation}

Addressing the observer/identifier  design problem for hybrid systems is not purely motivated by intellectual curiosity (or challenge). We describe next a realistic scenario in which the problem arises naturally. 
\begin{remark} \label{remtd}
We stress that after obvious modifications, the developments that follow remain valid if the functions $A_c$, $A_d$, $\Psi_c$, and $\Psi_d$ as well as the sets $C$ and $D$ depend on the (hybrid) time. That is,  $\Sigma_P$ can have the general 
form
 \begin{equation*}
 \begin{aligned} 
 \Sigma_P : & \left\{ 
 \begin{matrix}
\begin{matrix} 
 \dot{x}=A_c(t,j,y,u) x + \Psi_c(t,j,y,u) \theta 
 \\ \dot{t} = 1,  \qquad \dot{j} = 0
 \end{matrix}
  & 
 (x,t,j)  \in  C
 \\[3pt]
 \begin{matrix}
 x^+= A_d(t,j,y,u) x + \Psi_c(t,j,y,u) \theta   \\
 t^+ = t, \qquad j^+  = j +1
 \end{matrix}
 &  (x,t,j) \in  D. 
 \end{matrix} \right.
 \end{aligned}
 \end{equation*}
\end{remark}

\begin{example} \label{example2}
Consider the continuous-time  plant \eqref{1304} and, for simplicity, let the entire state be measurable, that is, let $y = x$.  Furthermore, let 
$t \mapsto (y_1(t),u_1(t))$ be a pair of real-time input-output data defined in continuous time. 
Hence, by letting  $t \mapsto A_1(t) := A_c(y_1(t),u_1(t))$ and $t \mapsto B_1(t) :=  \Psi_c(y_1(t),u_1(t))$, we conclude that the real-time state variable $x_1$ is governed by 
\begin{equation} \label{1307-} 
\dot x_1 = A_1(t) x_1  + B_1(t) \theta  \qquad t \geq 0. 
\end{equation}

Now, suppose that the past experiences have generated other pairs of 
input-output data. Let $t \mapsto (y_2(t),u_2(t))$ be one such pairs, also defined in continuous time. We assume that we were able to save the input-output data corresponding to a specific sequence of times in the past
$\{\tau_1, \tau_2,... , \tau_J\} \subset \mathbb{R}_{\leq 0}$.  More specifically,  we are able to save the sequence $\{ (y_2(\tau_j), u_2(\tau_j)) \}^{J}_{j=1}$,  and a sequence of pairs 
$$  \{A_{2}(j), B_{2}(j) \}^J_{j=1} \subset \mathbb{R}^{m_x \times m_x} \times \mathbb{R}^{m_x \times m_\theta},  $$
 so that the old state vector $x_2$ satisfies  
\begin{equation} \label{1305} 
 x_2(\tau_{j+1}) = A_{2}(j) x_2(\tau_j)  + B_{2}(j) \theta,  \quad \forall j \in \{1,2,... , J\}.
\end{equation}
To compute the pair $j \mapsto (A_{2}(j), B_{2}(j))$,  we let  
$A_2 := A_c(y_2,u_2)$ and $B_2 :=  \Psi_c(y_2,u_2)$.
Furthermore,   by letting $\mathcal{M}_2$ be the transition matrix corresponding to the system $\dot{x} = A_2(t) x$,  we conclude that 
$$ x_2(\tau_{j+1}) = \mathcal{M}_2(\tau_{j+1}, \tau_j) x_2(\tau_j)  + \int^{\tau_{j+1}}_{\tau_j} \hspace{-0.3cm} \mathcal{M}_2(\tau_{j+1}, s) B_2(s) ds   \theta.   $$
Hence,  we obtain, for each $j \in \{1,2,...,J\}$, 
$ A_{2}(j) :=  \mathcal{M}_2(\tau_{j+1}, \tau_j)$ and  
$ B_{2}(j) := \int^{\tau_{j+1}}_{\tau_j} \hspace{-0.3cm} \mathcal{M}_2(\tau_{j+1}, s) B_2(s) ds.  $
As a result,  the old state $x_2$ is governed by the discrete system 
\begin{equation} \label{1305+} 
 x_2^+ = A_{2}(j) x_2  + B_{2}(j) \theta  \qquad j \in \{1,2,... , J\}.
\end{equation}
Now,  as in Example \ref{exp1},  we assume that the old data $(y_2,u_2)$ needs to be treated at specific times defining the sequence  $\{t_1, t_2, ... ,t_J \} \subset \mathbb{R}_{\geq 0}$  with  $t_j \leq t_{j+1}$. 
The latter takes us to introduce the hybrid domain 
$$   E   := [0, t_1] \times \{ 0 \} \cup [t_1,  t_2] \times \{ 1 \} \cup ...   \cup [t_J, +\infty) \times 
\{J\}.   $$

The dynamics of both old and real-time data is governed by a hybrid system in the form of $\Sigma_P$.   To see this,  we consider  the augmented state vector $(x,t,j) \in \mathbb{R}^{2m_x} \times E$, where $x := (x_1,x_2)$.  Furthermore,  we introduce the hybrid arcs defined on $E$ by
$
\bar \Psi_c(t,j)  :=  \begin{bmatrix} B_1(t) \\ 0  \end{bmatrix}, ~~
\bar \Psi_d(t,j)  := 
\begin{bmatrix} 0 \\ B_{2}(j)  \end{bmatrix},  $
\begin{align*}
\bar A_c(t,j) & :=  \text{blkdiag} \{A_1(t), 0\},
\\
\bar A_d(t,j) & :=  \text{blkdiag} 
\{ I_{m_x} , A_{2}(j) \}. 
\end{align*}
 Thus, the plant with the available 
 (old and real-time) data can be accurately modelled as in \eqref{SigmaP} with the right-hand sides and the flow and jump sets therein being dependent on the hybrid time $(t,j)$---cf. Remark \ref{remtd}. That is, we introduce the   time-varying hybrid system 
\begin{equation*}
\begin{aligned}
\bar \Sigma_P : 
 \left\{
\begin{matrix}
\hspace{-0.8cm} 
\begin{matrix}
\dot{x}  =  \bar A_c(t,j) x +   \bar \Psi_c (t,j) \theta
\end{matrix} \qquad  t\in \text{int}(I^j_E)
\\
\begin{matrix}
x^+  =   \bar A_d(t,j) x +  \bar \Psi_d (t,j) \theta 
\end{matrix}
\quad  (t,j), (t,j+1) \in E.
\end{matrix}
\right.\color{black} 
\end{aligned}
\end{equation*}
The scenario described above suggests that an efficient adaptive observer/identifier should be able to explore real-time data $(y_1,u_1)$ to estimate $\theta$ over each interval $I^j_E$. Moreover,  it should  exploit old data $(y_2,u_2)$ at the specific times $\{t_j\}^{J}_{j=1}$.  The latter sequence of times can be periodic or dictated an external supervisory algorithm. \end{example} 

In what follows of this section, we construct a dynamics hybrid observer/identifier and establish asymptotic stability of the origin, in the space of the estimation errors.  
The proposed observer/identifier design for $\Sigma_P$ builds upon the designs in \cite{989154} for continuous- and in \cite{guyader2003adaptive} for discrete-time systems,  \eqref{1304} and \eqref{1304bis}---we revisit them below.   As in \cite{loria2009adaptive},  we use a PE condition along the trajectories,  to guarantee the convergence of the estimation errors.   However,  in contrast to these and other similar works,  the design takes into account the fact that the system's solution is hybrid,  which flows and jumps.  As we show on a concrete example,  the richness induced by the hybrid behavior is fundamental to achieve the estimation goals, when other algorithms fail.  

\subsubsection{The continuous-time estimator}  
The following development is inspired  by \cite{989154}. Consider the continuous-time estimator\color{black} 
\begin{equation}  \label{1316} 
\dot{\hat x} = A_c(y,u) \hat x + K_c(y,u)(y - \hat y) + \Psi_c(y,u) \hat\theta + v_c,
\end{equation}
where $\Psi_c(y,u) \hat\theta$ is meant to compensate for the effect of the uncertainty in \eqref{1304}, the term $K_c(y,u)(y - \hat y)$ is common in Luenberger and Kalman-filter based observers, and $v_c$ is a term to be designed. It is assumed that an observer gain $K_c(y,u)$ is known such that the origin for the the dynamical system 
\begin{equation}
  \label{1320} \dot \eta = A_\eta(y,u) \eta, \quad A_\eta:=[A_c(y,u) - K_c(y,u)H]
\end{equation}
is exponentially stable, uniformly in some admissible pairs $(u,y)$.   Indeed,  the state-estimation error dynamics,  resulting from subtracting \eqref{1316} from \eqref{1304} is given by
\begin{equation}  \label{1324} 
\dot e = A_\eta(y,u) e - \Psi_c(y,u) 
\tilde \theta - v_c.
\end{equation}
Hence, it is left to design $v_c$ and $\dot{\tilde\theta} = \dot{\hat\theta}$ such that the second and third terms on the right-hand side of \eqref{1324} vanish. For $\dot{\tilde\theta}$ we seek an implementable  classic adaptation law of gradient type, so we pose 
\begin{equation}  \label{1329} 
  \dot{\hat\theta} := \gamma_c \psi (y - \hat y), \quad \gamma_c >0,
\end{equation}
with $\psi$ to be determined. To render this adaptation law of the ``gradient'' form $\dot{\tilde\theta} = -\gamma_c[\,\cdot\,]\tilde\theta$, we let 
$e := \eta - \Gamma\tilde \theta$ or, equivalently, 
\begin{equation}  \label{1328} 
\eta := e + \Gamma \tilde \theta,
\end{equation}
where $\Gamma$ is to be defined so that $\eta$ satisfies both \eqref{1328} and \eqref{1320}. That is, the latter becomes a target equation for $\eta$ in \eqref{1328}. Now, differentiating on both sides of \eqref{1328}, we obtain
\begin{equation}
  \label{1332} \dot \eta = A_\eta \eta - [A_\eta\Gamma + \Psi_c - \dot \Gamma]\tilde\theta 
+ \gamma_c\Gamma \psi He - v_c,
\end{equation}
in which we temporarily dropped all the arguments to avoid a cumbersome notation.  We see that we recover \eqref{1320} if we set $v_c := \Gamma \psi H e$, which may be implemented as  
  $v_c := \gamma_c\Gamma \psi (y - \hat y)$, 
and $\dot\Gamma :=  A_\eta\Gamma + \Psi_c$.  It is left to define $\psi$ in \eqref{1329}. To that end, we note that the latter is equivalent to 
$\dot{\tilde\theta} = -\gamma_c \psi H\Gamma\tilde\theta + \gamma_c \psi H \eta$,  in which,  $\eta$ is guaranteed to converge to zero exponentially. So, we set  $\psi := \Gamma^\top H^\top$ to obtain  the perturbed gradient-descent system  
\begin{equation}
  \label{1348} \dot{\tilde\theta} = - \gamma_c \psi \psi^\top  \tilde\theta + \gamma_c \psi H \eta. 
\end{equation}
It is intuitively clear that if $\psi \psi^\top$ is PE,  along the system's trajectories, the parameters $\tilde\theta\to 0$. Since also $\eta\to 0$, we obtain that $e\to 0$. This is the rationale that leads to the design of the adaptive observer/identifier for the bilinear system \eqref{1304},  in continuous time,  given by 
\begin{subequations}\label{1355}
\begin{align}  \label{1355a} 
\dot{\hat x} &= A_c \hat x + K_c(y - \hat y) + \Psi_c \hat\theta + \gamma_c\Gamma \psi (y-\hat y),\\ 
\label{1355c} 
\dot{\hat\theta} & = \gamma_c\psi ( y - \hat y), \quad  \psi := \Gamma^\top\! H^\top,    \\   \label{1355b} 
\dot \Gamma & = [A_c - K_cH]\Gamma + \Psi_c, 
\end{align}
\end{subequations}
---cf. \cite{989154,loria2009adaptive}. \\

\subsubsection{The discrete-time estimator}
In discrete time, a similar reasoning leads to the hybrid  estimator given by 
\begin{subequations}\label{1353}
     \begin{align}
         \label{1353a}  \hat x^+ & = A_d \hat x + \left[K_d + \frac{\gamma_d \Gamma^+ \psi}{1 + \gamma_d|\psi|^2}\right] (y - \hat y)  + \Psi_d \hat\theta,  
         \\   \label{1353c} 
 \hat\theta^+ & = \hat \theta + \frac{\gamma_d \psi}{1 + \gamma_d|\psi|^2} (y-\hat y), \quad \gamma_d >0,  \\
         \label{1353b} \Gamma^+ & = [A_d - K_dH]\Gamma + \Psi_d,
     \end{align}
   \end{subequations}
where $K_d$ is designed so that  $\eta^+  = [A_d - K_dH] \eta$ is UES. 

Next, we use the designs above, tailored separately for systems \eqref{1304} and \eqref{1304bis}, to construct a properly defined hybrid estimator for the plant, seen as a time-varying hybrid system. 
To that end, we start by underlining some technical aspects that were left out in Example \ref{example2} for the sake of clarity. \\

\subsubsection{The hybrid estimator}
 Consider the autonomous hybrid plant $\Sigma_P$.  Given an initial condition $x_o \in C \cup D$,  and an input signal $u : \dom u \rightarrow \mathbb{R}^{m_u}$ belonging to a subset of admissible hybrid arcs denoted by $\mathcal{U}$,  we denote by  $\lambda := (x_o,u)$ the vector of variables that  parameterize  the system's solutions. That is,  we denote by $\phi^\lambda$ the solution, which,  without loss of generality,  starts at 
$(t_o,j_o) \in \dom u$.   The solution $\phi^\lambda$ flows and jumps depending on the flow and jump sets $C$ and $D$.   
Furthermore,  we assume that $\dom u = \dom \phi^\lambda$.  There is no loss of generality in this hypothesis because the domain of $u$ can be adjusted by creating virtual jumps,  so that the pair $(u,\phi^\lambda)$ forms a solution pair  to $\Sigma_P$. 

 In turn, the hybrid system produces an output trajectory $(t,j) \to y^\lambda(t,j)$,  given by $y^\lambda(t,j):= H \phi^\lambda(t,j)$, which is  also parameterized by  $\lambda$.   
In addition, we let the set $I_\lambda^j := \left\{t : (t,j) \in \dom \phi^\lambda \right\}$ and, for any function $(y,u)\mapsto M(y,u)$ such that $M\in \{ A_c, K_c, \Psi_c, A_d, K_d, \Psi_d, A_\eta, B_\eta \}$, we define $M^\lambda(t,j) := M(y^\lambda(t,j),u(t,j))$. Thus, after eqs. \eqref{1355} and \eqref{1353}, we introduce  the proposed hybrid observer/identifier: 
\begin{subequations} 
\label{1349}
  \begin{align}
&\hspace{-1.1mm}\dot{\hat x} = A_c^\lambda \hat x + K_c^\lambda(y - \hat y) + \Psi_c^\lambda \hat\theta + \gamma_c \Gamma_c  \psi^{\bar{\lambda}}(y-\hat y),  \quad \\
         \label{1349c} 
&\hspace{-.9mm}\dot{\hat\theta}  = \gamma_c  \Gamma_c^\top \! H^\top  (y - \hat y),  \\
         \label{1349b} 
&\hspace{-2.8mm}\dot \Gamma_c  = A_\eta^\lambda\Gamma_c + \Psi_c^\lambda, \quad \dot \Gamma_d = 0 
\ \qquad
\forall t\,\in \text{int}(I_\lambda^j),  \ 
\end{align}
\end{subequations}
\vskip -15pt
\begin{subequations}\label{1357}
\begin{align}  \label{1357a} 
 \hat x^+ & = A_d^\lambda \hat x + \bigg[K_d^\lambda + \frac{\gamma_d \Gamma_d^+ \psi^{\bar{\lambda}}}{1+\gamma_d|\psi^{\bar{\lambda}}|^2}\bigg](y-\hat y) + \Psi_d^\lambda \hat\theta  ,\\ 
\label{1357c} 
\hat\theta^+ & = \hat\theta + \frac{\gamma_d 
\psi^{\bar{\lambda}}}{1+\gamma_d|\psi^{\bar{\lambda}}|^2} (y - \hat y), \\
\label{1357b} 
\Gamma_c^+ & = \Gamma_c,\quad  \Gamma_d^+ = [I-B_\eta^\lambda]\Gamma_d + \Psi_d^\lambda \mbox{} \hspace{0.2in} \, \text{otherwise},
  \end{align}
   \end{subequations}
 where $A^\lambda_\eta := A^\lambda_c - K^\lambda_c H$,  $B^\lambda_\eta := A^\lambda_d - K^\lambda_d H$, and 
  \begin{align*}
\psi^{\bar\lambda}(t,j) := 
\left\{ 
\begin{array}{cl} 
\Gamma_c(t,j)^\top H^\top & \text{if} ~~  t \in \text{int}(I^\lambda_j) 
\\ 
\Gamma_d(t,j)^\top H^\top & \text{otherwise}.
\end{array}
\right.
\end{align*}

Note that $\psi^{\bar\lambda}$ is parameterized by  
the vector $\bar\lambda$,  which includes $\lambda$ but also includes the initial conditions $(\Gamma_{co}, \Gamma_{do})$ for $(\Gamma_c, \Gamma_d)$; namely,  $\bar{\lambda} := (\lambda, \Gamma_{co}, \Gamma_{do}) $.

\begin{remark}
Note that in assuming that all hybrid arcs have the same hybrid domain $\dom \phi$ 
it is required to know when the system's trajectories $\phi^\lambda$ jump. This is an implicit standing assumption that is needed for a coherent definition of the resulting time-varying hybrid system. However, it is little conservative in that it is not tantamount to assuming the knowledge of unmeasured variables. 
\end{remark}   

The following is our main statement of this section. 
\begin{proposition}\label{prop:OBS}
Consider the hybrid plant 
$\Sigma_P$ in \eqref{SigmaP},  governed  by \eqref{1304} when $x \in C$ and governed by \eqref{1304bis} when $x \in D$.  Assume that $A_d$,  $\Psi_c$,  $\Psi_d$,  $A_c$,  and $A_d$ are continuous, let  $\phi^\lambda$ be a parameterized solution,  such that the corresponding input-output pair $(u, y^\lambda)$ is uniformly bounded (in $\lambda$ and in the hybrid domain) and let   
\begin{equation}
 \label{1418} 
\Sigma^{\bar\lambda}(t,j) := \frac{\gamma_d\psi^{\bar\lambda}(t,j) \psi^{\bar\lambda}(t,j)^\top}{1 + \gamma_d |\psi^{\bar \lambda}(t,j)|^2}.
\end{equation}
  
  Then, the origin $\{(e,\tilde\theta)=(0,0)\}$ is globally exponentially stable, uniformly in $\bar{\lambda}$ if 
\begin{enumerate}[topsep=0pt,label={(\roman{enumi})},leftmargin=*]
\item for each $\lambda$,  the pair $(A_\eta^\lambda,B_\eta^\lambda)$ satisfies Assumption \ref{ass:UGS}; and it is HUO uniformly in $\lambda$;
\item the pair $\left( \psi^{\bar\lambda} {\psi^{\bar\lambda}}^\top, \Sigma^{\bar\lambda} \right)$ is HPE,  uniformly in $\bar{\lambda}$. \\[-24pt]
\end{enumerate}
\end{proposition}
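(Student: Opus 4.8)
The plan is to reduce the observation–estimation error dynamics to a cascade and then invoke Theorems~\ref{HUO=>UES} and~\ref{thm:PE=>UES}. Following the rationale that led to~\eqref{1348}, I would set $\eta := e + \Gamma\tilde\theta$, with $\Gamma := \Gamma_c$ along flows and $\Gamma := \Gamma_d$ at jumps, $\Gamma_c,\Gamma_d$ being the filter states in~\eqref{1349b} and~\eqref{1357b}. Using the plant $\Sigma_P$, the observer/identifier~\eqref{1349}--\eqref{1357}, the choice $v_c=\gamma_c\Gamma_c\psi^{\bar\lambda}(y-\hat y)$, and the prescribed $\Gamma$-updates, a direct computation shows that $\eta$ obeys the \emph{autonomous} hybrid dynamics $\dot\eta=A_\eta^\lambda\eta$ on flows and $\eta^+=B_\eta^\lambda\eta$ at jumps --- that is, $\mathcal H'_0$ with $(A,B)$ taken, in the normalized form~\eqref{606}, to be $(-A_\eta^\lambda,\,I_{m_x}-B_\eta^\lambda)$ --- while, after eliminating $e$ through $He=H\eta-H\Gamma\tilde\theta$, the estimation error satisfies
\begin{align*}
\dot{\tilde\theta} &= -\gamma_c\psi^{\bar\lambda}{\psi^{\bar\lambda}}^{\!\top}\tilde\theta + \gamma_c\psi^{\bar\lambda}H\eta, \qquad t\in\mathrm{int}(I_\lambda^j),\\
\tilde\theta^{+} &= \big[I_{m_\theta}-\Sigma^{\bar\lambda}\big]\tilde\theta + \frac{\gamma_d\,\psi^{\bar\lambda}}{1+\gamma_d|\psi^{\bar\lambda}|^2}\,H\eta,
\end{align*}
with $\Sigma^{\bar\lambda}$ as in~\eqref{1418}. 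Thus the $\tilde\theta$-dynamics is exactly $\mathcal H'_\nu$ from~\eqref{606}, with $A(t,j)=\gamma_c\psi^{\bar\lambda}{\psi^{\bar\lambda}}^{\!\top}$, $B(t,j)=\Sigma^{\bar\lambda}(t,j)$ and perturbation $\nu$ given by the $\eta$-dependent terms above; the full error system is the cascade of the $\eta$-subsystem driving the $\tilde\theta$-subsystem.

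Next I would gather the regularity that lets us apply the main theorems. Boundedness of $(u,y^\lambda)$ and continuity of $A_c,A_d,\Psi_c,\Psi_d,K_c,K_d$ make $A_\eta^\lambda,B_\eta^\lambda,\Psi_c^\lambda,\Psi_d^\lambda$ uniformly bounded. By hypothesis~(i) the $\eta$-subsystem satisfies Assumption~\ref{ass:UGS} and is HUO uniformly in $\lambda$, so Theorem~\ref{HUO=>UES} gives that $\{\eta=0\}$ is UES uniformly in $\lambda$; moreover, the filters $\Gamma_c,\Gamma_d$ are driven by the bounded signals $\Psi_c^\lambda,\Psi_d^\lambda$ through exponentially stable dynamics, so $\Gamma_c,\Gamma_d$ --- hence $\psi^{\bar\lambda}$ --- are uniformly bounded. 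Consequently the $\tilde\theta$-subsystem satisfies Assumption~\ref{ass:linf}; it satisfies Assumption~\ref{ass:str} as well, since $\gamma_c\psi^{\bar\lambda}{\psi^{\bar\lambda}}^{\!\top}$ and $\Sigma^{\bar\lambda}$ are symmetric positive semidefinite and $|\Sigma^{\bar\lambda}|=\gamma_d|\psi^{\bar\lambda}|^2/(1+\gamma_d|\psi^{\bar\lambda}|^2)<1$; and by~(ii) the pair $(\gamma_c\psi^{\bar\lambda}{\psi^{\bar\lambda}}^{\!\top},\Sigma^{\bar\lambda})$ is HPE uniformly in $\bar\lambda$ (the constant $\gamma_c>0$ merely rescales $\mu$). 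Theorem~\ref{thm:PE=>UES} then furnishes UES of the unperturbed $\tilde\theta$-subsystem and ISS of $\mathcal H'_\nu$ with respect to $\nu$, uniformly in $\bar\lambda$.

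It remains to close the cascade. Since $|\nu(t,j)|\le c_\eta|\eta(t,j)|$ for a constant $c_\eta$ uniform in $\bar\lambda$ (using boundedness of $\psi^{\bar\lambda}$, $H$, $\gamma_c$, $\gamma_d$) and $\eta$ decays exponentially and uniformly, the ISS estimate for the $\tilde\theta$-subsystem --- combined, e.g., with a weighted sum of the Lyapunov-type function $W$ used in the proof of Theorem~\ref{HUO=>UES} (for $\tilde\theta$) and a Lyapunov function for $\eta$, or with the hybrid comparison argument of Lemma~\ref{lem:comparison} --- yields $\tilde\theta\to0$ exponentially and uniformly in $\bar\lambda$. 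Then $e=\eta-\Gamma\tilde\theta$ with $\Gamma$ uniformly bounded gives $e\to0$ exponentially, and since $(\eta,\tilde\theta)$ and $(e,\tilde\theta)$ are related by a uniformly bounded, uniformly invertible linear map, the origin $\{(e,\tilde\theta)=(0,0)\}$ is globally exponentially stable uniformly in $\bar\lambda$. The step I expect to be the main obstacle is the first one --- carrying out the coordinate change consistently over the hybrid domain, in particular verifying that with the prescribed $\Gamma_c/\Gamma_d$ updates and the chosen $v_c$ the $\eta$-dynamics is \emph{exactly} autonomous at jumps as well as along flows --- together with the uniform boundedness of the filter states $\Gamma_c,\Gamma_d$, on which Assumption~\ref{ass:linf} for the $\tilde\theta$-subsystem and the meaningfulness of hypothesis~(ii) both rest.
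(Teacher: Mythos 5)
Your proposal is correct in substance and follows the same backbone as the paper's proof: the change of coordinates $\eta=e+\Gamma\tilde\theta$, UES of the autonomous $\eta$-subsystem from hypothesis (i) via Theorem~\ref{HUO=>UES}, and treatment of $\tilde\theta$ as a hybrid gradient system perturbed by an $\eta$-proportional input, settled by hypothesis (ii) through Theorems~\ref{thm:PE=>UO}--\ref{thm:PE=>UES} together with the ISS part of Theorem~\ref{HUO=>UES}. Where you genuinely deviate is in how $e$ is recovered: the paper never uses the algebraic relation at the end, but writes the full error equations \eqref{1405}, \eqref{1399}, views $(e,\tilde\theta)$ jointly as a system of the form \eqref{606} driven by the $\eta$-dependent input \eqref{1436}, and then, inside the unperturbed system \eqref{1441}--\eqref{1448}, views the $e$-equation as \eqref{606} driven by the $\tilde\theta$-dependent input \eqref{1422} --- i.e., two further invocations of Theorem~\ref{HUO=>UES} (UES and ISS of the $e$-dynamics under (i)) before UES of the gradient subsystem under (ii) closes the nested cascade. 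Your shortcut $e=\eta-\Gamma\tilde\theta$ with $\Gamma$ uniformly bounded reaches the same conclusion more economically, at the price of not exhibiting the ISS structure of the $(e,\tilde\theta)$ error system that the paper's route makes explicit. Two cautions. First, both arguments lean on uniform boundedness of $\Gamma_c,\Gamma_d$ (hence of $\psi^{\bar\lambda}$): the paper simply asserts that the factors of $\eta$ and $\tilde\theta$ in \eqref{1436} and \eqref{1422} are uniformly bounded, and your justification (``exponentially stable filters driven by bounded signals'') does not follow from hypothesis (i) alone, since $\Gamma_c$ never jumps and $\Gamma_d$ never flows, so stability of the combined flow--jump pair $(A_\eta^\lambda,B_\eta^\lambda)$ gives no decay for either filter separately; this boundedness is in effect an implicit standing assumption in both proofs, and it is also what makes Assumption~\ref{ass:linf} and the uniform HPE in (ii) meaningful. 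Second, the point you flagged as the main obstacle --- exactness of the autonomous $\eta$-dynamics at jumps --- is real but amounts to a notational wrinkle already present in the paper: with $\Gamma_d^+=(A_d-K_dH)\Gamma_d+\Psi_d$ (the intended reading of \eqref{1357b}) a direct computation gives $\eta^+=(A_d-K_dH)\eta$, which is your $\eta^+=B_\eta^\lambda\eta$ and the paper's \eqref{1399c} under its convention of writing jump maps as $I-B$; the only genuine care needed is to use the appropriate filter ($\Gamma_c$ on flow intervals, $\Gamma_d$ across jumps) consistently in the relation $\eta=e+\Gamma\tilde\theta$, exactly as the paper does implicitly when deriving \eqref{1405} and \eqref{1399}.
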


\begin{remark}
In  (i) and (ii) we require HUO and HPE to hold uniformly in $\lambda$ and $\bar \lambda$, respectively.  This means that inequalities \eqref{eq:HUO} and \eqref{eq:HPE} hold with $(K, \mu)$ independent of $(\lambda, \bar\lambda)$.     
\end{remark}

\begin{proof}
The proof follows by invoking Theorems \ref{HUO=>UES} and \ref{thm:PE=>UES}. To show this,  we start by writing the error dynamics in the form \eqref{606}. Using $e := x-\hat x$,  $\tilde\theta := \hat\theta-\theta$, the previously introduced notation, Eqs. \eqref{1304},  \eqref{1304bis}, \eqref{1355}, and  \eqref{1353}, we obtain the estimation error dynamics
\begin{subequations} 
\label{1405}
\begin{align}
 \dot e = &\  A_\eta^\lambda(t,j)e - \Xi_c^{\bar\lambda}(t,j) \tilde \theta 
\label{1405a} 
- \gamma_c\Gamma_c(t,j) \psi^{\bar\lambda}(t,j)H \eta\\   \label{1405b} 
\dot{\tilde\theta} = &\ - \gamma_c \psi^{\bar\lambda}(t,j)\psi^{\bar\lambda}(t,j)^\top \tilde\theta + \gamma_c\psi^{\bar\lambda}(t,j)H \eta \\    \label{1405c}         
\dot \eta = &\  A_\eta^\lambda(t,j)\eta
\end{align}
\end{subequations}
for all $t \in I^j_\lambda$, where to abbreviate we also introduced 
$$ \Xi_c^{\bar \lambda}(t,j):= \left[ \Psi_c^\lambda(t,j) - \gamma_c\Gamma_c(t,j)\psi^{\bar\lambda}(t,j)H\Gamma_c(t,j) \right],  $$ 
and 
\begin{subequations}\label{1399}
\begin{align} \label{1399a} 
e^+ = &\ \big[ I_{m_x} - B_\eta^\lambda(t,j) \big] e - \Xi_d^{\bar\lambda}(t,j) \tilde\theta 
 - \Gamma_d^+(t,j) \Delta^{\bar\lambda}(t,j) \eta \\
         \label{1399b} 
\tilde \theta^+ = &\  \Big[ I_{m_\theta} - \Sigma^{\bar\lambda}(t,j) \Big] \tilde\theta
         + \Delta^{\bar\lambda}(t,j)\eta,\\
         \label{1399c} \eta^+ = &\ \big[ I_{m_x} - B_\eta^\lambda(t,j) \big] \eta
     \end{align}
   \end{subequations}
for all $(t,j) \in \dom \phi^\lambda$ such that $(t,j+1) \in \dom \phi^\lambda$, where 
%
 \begin{align*}
\Xi_d^{\bar\lambda}(t,j) &:= \big[ \Psi_d^\lambda(t,j) - \Gamma_d^+(t,j) \Sigma^{\bar\lambda}(t,j)\big]
\\
 \Delta^{\bar\lambda}(t,j) & :=\frac{\gamma_d 
\psi^{\bar\lambda}(t,j) H}{1 + \gamma_d |\psi^{\bar\lambda}(t,j)|^2}.  
 \end{align*}
The equations \eqref{1405c}-\eqref{1399c} constitute a time-varying system of the form \eqref{606} with $\zeta =\eta$ and $\nu \equiv 0$. Global exponential stability, uniform in  the initial time and  in $\lambda$, follows after Item (i) of the Proposition, invoking Theorem \ref{HUO=>UES}. 

On the other hand,  equations \eqref{1405a}-\eqref{1405b} together with \eqref{1399a}-\eqref{1399b} form, in turn, another system of the form \eqref{606}, with state $\zeta := [e^\top\ \tilde\theta^\top]^\top$ and parameterized input 
\begin{equation}  \label{1436} 
\hspace{-0.4cm} \nu^{\bar\lambda}(t,j) := \left\{\hspace{-1ex}
\begin{array}{ll}
  \begin{bmatrix}
     -\gamma_c\Gamma_c(t,j) \psi^{\bar\lambda}(t,j)H\eta(t,j) \\
     \gamma_c \psi^{\bar\lambda}(t,j)H\eta(t,j)
  \end{bmatrix} \quad &  \hspace{-0.6cm} t \in \text{int}(I^j_\lambda)
  \\[10pt]
\begin{bmatrix}
  \Gamma^+_d(t,j) \Delta^{\bar\lambda}(t,j)\eta(t,j) \\
  \Delta^{\bar\lambda}(t,j) \eta(t,j)
\end{bmatrix}  & \hspace{-0.6cm}   \mbox{otherwise}.
\end{array}\right.
\end{equation}
Since $\eta$ converges uniformly and exponentially to zero and the factors of $\eta(t,j)$ in \eqref{1436} are uniformly bounded (both  in the initial time and in $\lambda$),  it is only left to show that the system \eqref{1405a}-\eqref{1405b} together with \eqref{1399a}-\eqref{1399b} is ISS with respect to $\nu^{\bar\lambda}$,  as in \eqref{1436}.  After the proof of Theorem \ref{HUO=>UES}, ISS follows if the origin is UES for the system with $\nu^{\bar\lambda} \equiv 0$,  which is governed by 
\begin{subequations} \label{1441}
\begin{align} \label{1441a} 
\dot e = &\  A_\eta^\lambda(t,j)e - \Xi_c^{\bar\lambda}(t,j) \tilde \theta\\         \label{1441b} 
\dot{\tilde\theta} = &\ -\gamma_c\psi^{\bar\lambda}(t,j)\psi^{\bar\lambda}(t,j)^\top \tilde\theta
 \end{align}
\end{subequations}
for all $t \in I^j_\lambda$, and 
\begin{subequations}\label{1448}
     \begin{align}
         \label{1448a} e^+ = &\ \big[ I_{m_x} - B_\eta^\lambda(t,j) \big] e - \Xi_d^{\bar\lambda}(t,j)\tilde\theta \\
         \label{1448b} 
\tilde \theta^+ = &\  \left[ I_{m_\theta} - \frac{\gamma_d \psi^{\bar\lambda}(t,j) \psi^{\bar\lambda}(t,j)^\top}{1 + \gamma_d |\psi^{\bar\lambda}(t,j)|^2} \right] \tilde\theta,
\end{align}
\end{subequations}
 for all $(t,j) \in \dom \phi^\lambda$ such that $(t,j+1) \in \dom \phi^\lambda$.   To show this, we invoke again Theorem \ref{HUO=>UES} under Items (i) and (ii) of the proposition. This time, we regard  the equations \eqref{1405a}-\eqref{1399a} as a time-varying hybrid system of the form \eqref{606} with state $\zeta = e$ and parameterized input $\nu^{\bar\lambda}$ defined as 
\begin{equation}
  \label{1422} 
\nu^{\bar\lambda}(t,j) = \left\{
\begin{array}{ll}
- \Xi_c^{\bar\lambda}(t,j) \tilde \theta(t,j) & t \in \text{int}(I^j_\lambda) \\[3pt]
 - \Xi_d^{\bar\lambda}(t,j) \tilde\theta(t,j) &   \mbox{otherwise}.
\end{array}\right.
\end{equation}
Note that the input defined in \eqref{1422} corresponds to uniformly bounded functions in factor of the solution to \eqref{1441b}-\eqref{1448b}, while the origin $\{e = 0\}$ is UES for the system 
\begin{align*}
  \dot e = &\  A_\eta^\lambda(t,j)e & t \in\text{int}(I^j_{\lambda})  \\
  e^+ = &\ \big[ I_{m_x} - B_\eta^\lambda(t,j) \big] e & (t,j),(t,j+1) \in \dom \phi^\lambda,
\end{align*}
as already established since, by assumption, for this system $A_\eta^\lambda$ and $B_\eta^\lambda$ satisfy Assumptions \ref{ass:UGS}--\ref{ass:linf} and the HUO property.  Furthermore, under the same conditions, \eqref{1441}-\eqref{1448} is ISS with respect to $\nu^{\bar\lambda}$ in \eqref{1422}.   It is only left to establish UES of $\{\tilde \theta = 0\}$ for \eqref{1441b}-\eqref{1448b}, which has exactly the form of the gradient-descent error dynamics studied in the previous section---see \eqref{1418}.  Therefore,  UES for \eqref{1441b}-\eqref{1448b} follows provided that the pair $\left( \psi^{\bar\lambda} {\psi^{\bar\lambda}}^\top, \Sigma^{\bar\lambda} \right)$ is HPE, uniformly in $\bar\lambda$,  that is Item (ii) of the proposition.
\end{proof}

\subsubsection{A numerical example} 
We illustrate the performance of the hybrid observer/identifier designed above on the vertical bouncing-ball system with actuated jumps given by
\begin{equation*}
\begin{aligned} 
\Sigma_p : & \left\{ 
\begin{matrix} 
\dot{x}=\begin{bmatrix} x_2 & - \theta \end{bmatrix}^\top & x  \in  \mathbb{R}_{\geq 0} \times \mathbb{R}
\\[3pt]
x^+=\begin{bmatrix} 0 \\ - 
\frac{\theta}{12.2625} x_2 + u \end{bmatrix} &  x \in  \left\{ x_1 = 0,~x_2 \leq 0 \right\}, 
\end{matrix} \right.
\end{aligned}
\end{equation*}
where $x \in \mathbb{R}^{2}$ includes the ball position and velocity,  $u \in \mathbb{R}$ is the input,  $y = x_1$ is the output,  and $\theta \in \mathbb{R}$ is a constant unknown parameter. The constant parameter $\theta$ in both the flow and the jump dynamics is not physically motivated, but it allows to illustrate the importance of HPE.

The dynamical model of the considered example is of the type as in \eqref{1304}-\eqref{1304bis}, with $A_c=\begin{bmatrix} 0 & 1 \\ 0 & -0.1 \\ \end{bmatrix}, \quad A_d=\begin{bmatrix} -1 & 0 \\ 0 & 0 \\ \end{bmatrix},$ $H = [ 1 \quad 0 ]$, $B_c = 0$,   $B_d=\begin{bmatrix} 0 & 1 \end{bmatrix}^\top$, $\Psi_c=\begin{bmatrix} 0 & -1  \end{bmatrix}^\top$, and $\Psi_d=\begin{bmatrix} 0 & -\frac{x_2}{12.2625}  \end{bmatrix}^\top$,  

The objective is to jointly estimate the state $x_2$ and the unknown parameter $\theta$ using the measurement of $y=x_1$ and the knowledge of the system's structure.  We assume that we can detect instantaneously when the solution of the system jumps.  Then, following Proposition \ref{prop:OBS}, we design the observer gains so as to satisfy Assumption \ref{ass:UGS}. 
First, we find scalars $a_c$,  $a_d < 0$,  $\beta$,  $M > 0$, matrices $K_c, K_d \in \mathbb{R}^{m_x \times m_{y}}$,  and a positive definite symmetric matrix $P \in \mathbb{R}^{m_x \times m_x}$ such that
\begin{subequations}
\label{eqn:Lyapcom1} 
\begin{align}
 (A_c - K_c H)^{\top} P + P (A_c - K_c H) & \leq a_c P
\\
(A_d - K_d H)^{\top} P (A_d - K_d H) & \leq e^{a_d} P
\\
  a_c t+a_d j \leq M-\beta(t+j)  \qquad \forall (t,j) \in &\mathbb{R}_{\geq 0} \times \mathbb{N}_{\geq 0}.
\end{align}
\end{subequations}
Indeed, under inequalities (\ref{eqn:Lyapcom1}) Assumption~\ref{ass:UGS} holds with $Q_d=(e^{a_d}-1)I_{m_{\theta}}$. Then, using the Schur complement, condition (\ref{eqn:Lyapcom1})  boils down to solving the LMI
\begin{align*}
    A_c^{\top} P + P A_c - L_c H - H^{\top} L_c^{\top} & <  0,  
     \\
    \begin{bmatrix} 
    P & (PA_d-L_dH)^{\top}  \\ PA_d-L_dH & P
    \end{bmatrix} & >  0,
\end{align*}
 which can be done using the toolbox YALMIP~\cite{lofberg2004yalmip}.  
The variables of the LMIs are $P \in \mathbb{R}^{2 \times 2}$ and $L_c, L_d \in \mathbb{R}^2$, and the observation gains are set to $K_c=P^{-1}L_c$ and $K_d=P^{-1}L_d$. 

Thus, we implemented the hybrid adaptive observer \eqref{1349}-\eqref{1357} and performed several illustrative simulations, with and without HPE. The results are showed in Figures \ref{fig:persis_No}--\ref{fig:persis_hybrid}.
\begin{figure}[h!]
	\begin{center}
		\includegraphics[width=85mm]{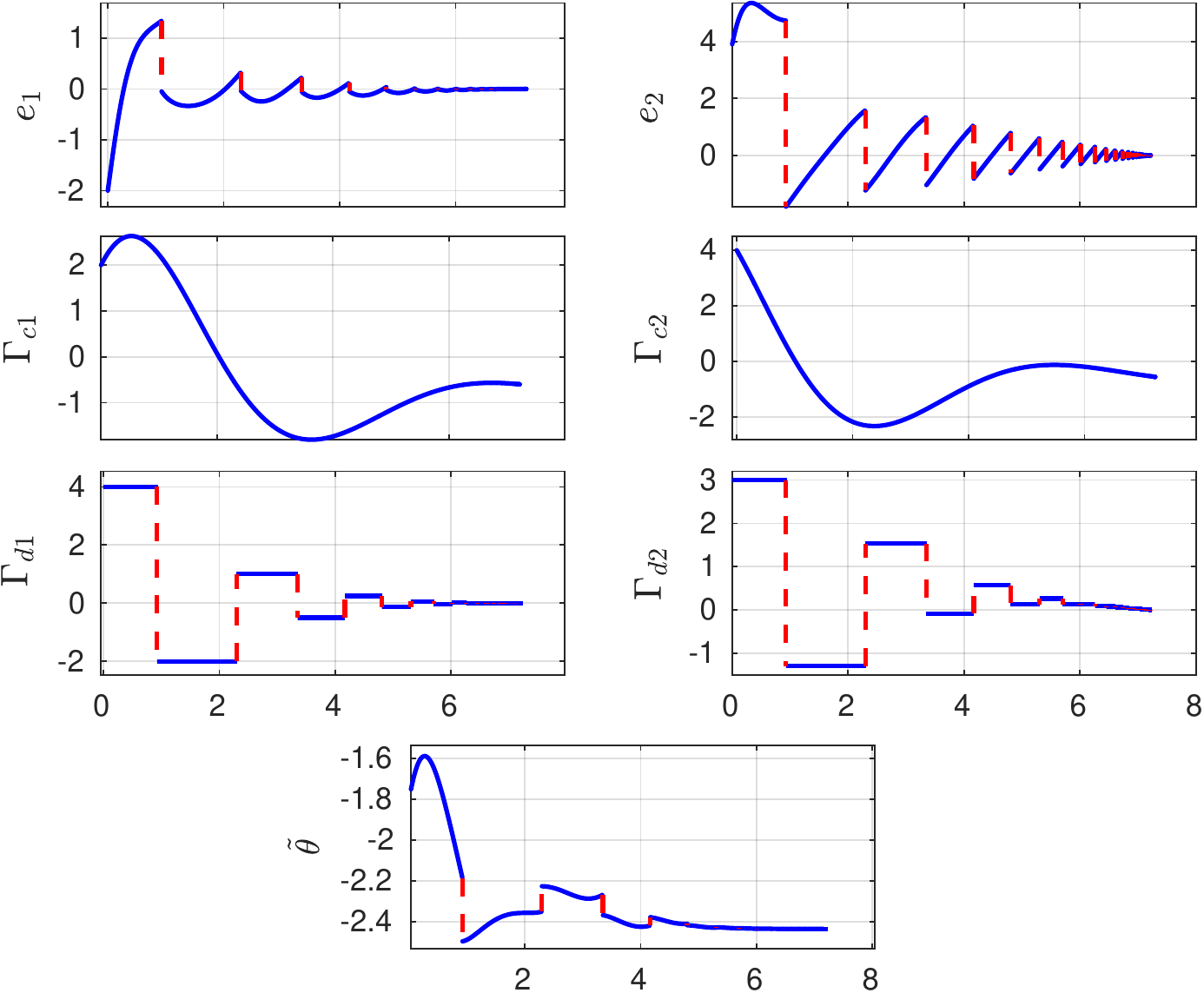}
	\end{center}
	\caption{Performance of the adaptive observer with $u=0$, {\it i.e.}, without HPE. The estimation errors $|\tilde\theta|$ fail to vanish}
	\label{fig:persis_No}	
\end{figure}

We show the evolution of the state estimation errors $e_i$, the filtered regressors $\Gamma_c$ and  $\Gamma_d$, and the parameter estimation error $\hat{\theta}-\theta$. In all the figures, the solid blue lines represent the flows and dashed red lines represent the jumps. 

The adaptation gains are set to $\gamma_c=0.4$ and $\gamma_d=0.8$ and for the observation gains we use the toolbox YALMIP, to find $K_c=[0.7215\quad  1.1184]^{\top}$ and $K_d=[-0.5\quad  0.5]^{\top}$, which satisfy the conditions from Proposition \ref{prop:OBS}. The initial conditions were set to $\hat{x}(0,0)=[4\quad 0.1]^{\top}$, $\Gamma_c(0,0)=[2\quad  4]^{\top}$, $\Gamma_d(0,0)=[4\quad  3]^{\top}$, and $\hat{\theta}(0,0)=8$.

First, we set $u=0$, so the system lacks excitation; the results are shown in Figure~\ref{fig:persis_No}. It is showed that the state estimation errors converge (exponentially), but the parameter estimate $\hat{\theta}$ does not converge to $\theta$. This is due to the fact that the pair $ \left( \psi^{\bar\lambda} {\psi^{\bar\lambda}}^\top,    \Sigma^{\bar\lambda} \right)$ is not HPE.  
\begin{figure}[h!]
	\begin{center}
		\includegraphics[width=85mm]{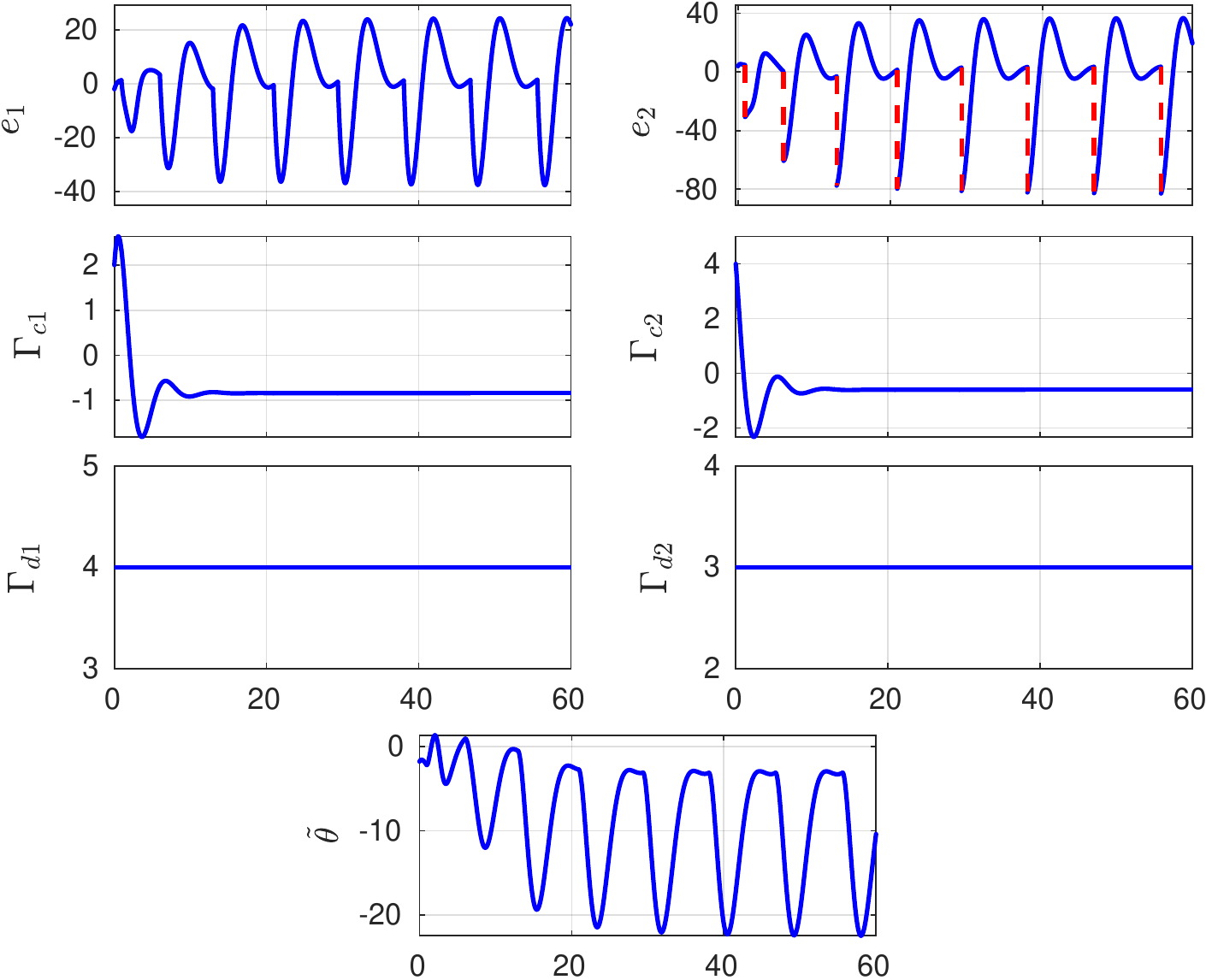}\\
	\end{center}
	\caption{Performance of the continuous-time adaptive observer/identifier from \cite{989154} with persistent input $u \equiv 20$. }
	\label{fig:persis_flow}	
\end{figure}
\begin{figure}[h!]
	\begin{center}
		\includegraphics[width=87mm]{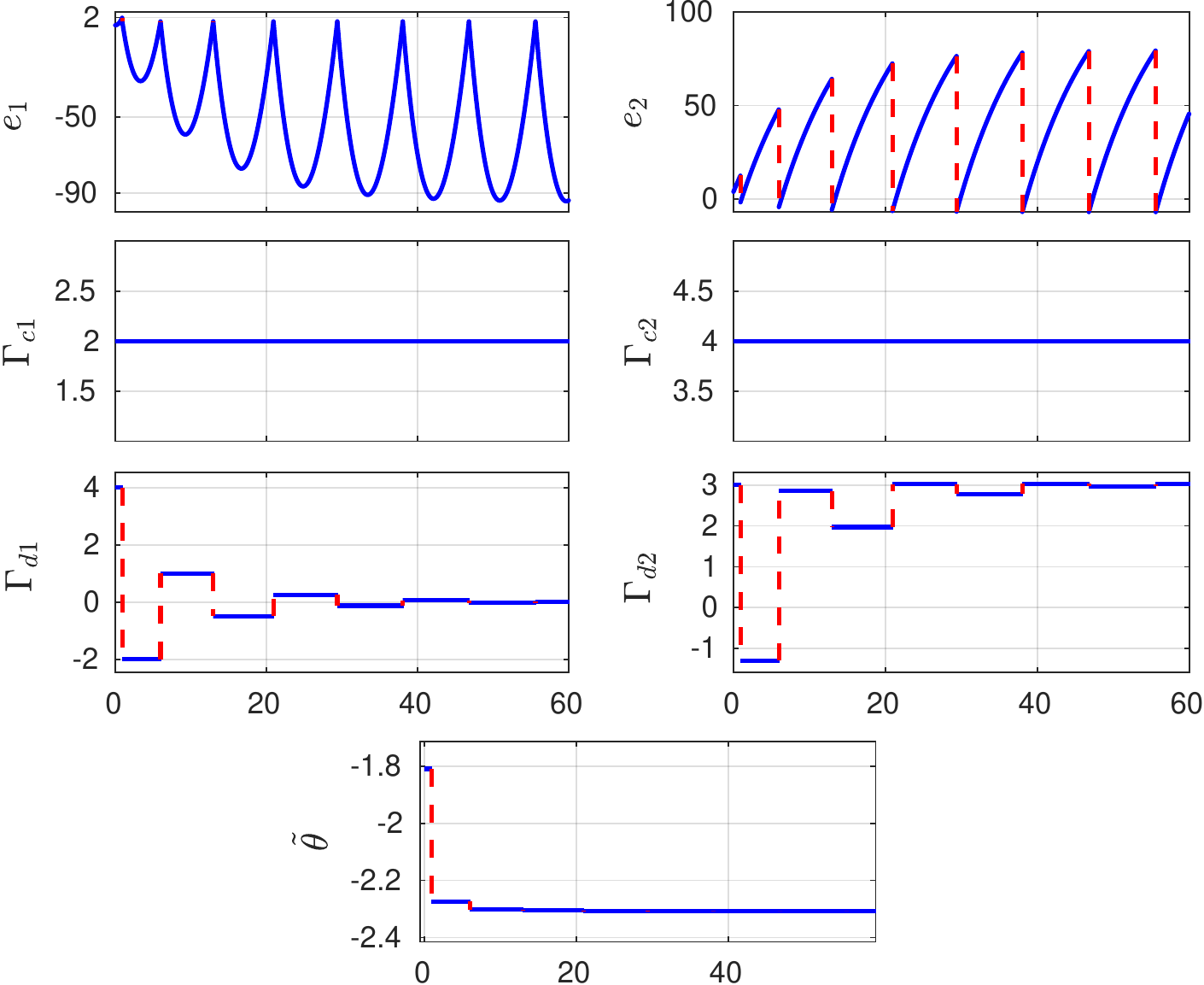}\\
	\end{center}
	\caption{Performance of the discrete-time adaptive observer/identifier from \cite{guyader2003adaptive} with $u \equiv 20$.}
	\label{fig:persis_jump}	
\end{figure}

In two other runs of simulation, we set $u\equiv 20$ and used the purely continuous-time adaptive observer from~\cite{989154}---see the results in Figure~\ref{fig:persis_flow}, and the purely discrete-time adaptive observer from~\cite{guyader2003adaptive}---the results are showed in Figure~\ref{fig:persis_jump}. In both  cases, neither the state-estimation nor the parameter-estimation errors vanish.

Finally, in a fourth simulation we tested the hybrid adaptive observer/identifier under the same conditions. In Figure~\ref{fig:persis_hybrid},  one can appreciate that both the state- and parameter-estimation errors vanish.  Indeed, in this case,  the pair $\left( \psi^{\bar\lambda} {\psi^{\bar\lambda}}^\top,  \Sigma^{\bar\lambda} \right)$ is HPE with $K=2$ and $\mu=0.7$. 

\begin{figure}[h!]
	\begin{center}
		\includegraphics[width=85mm]{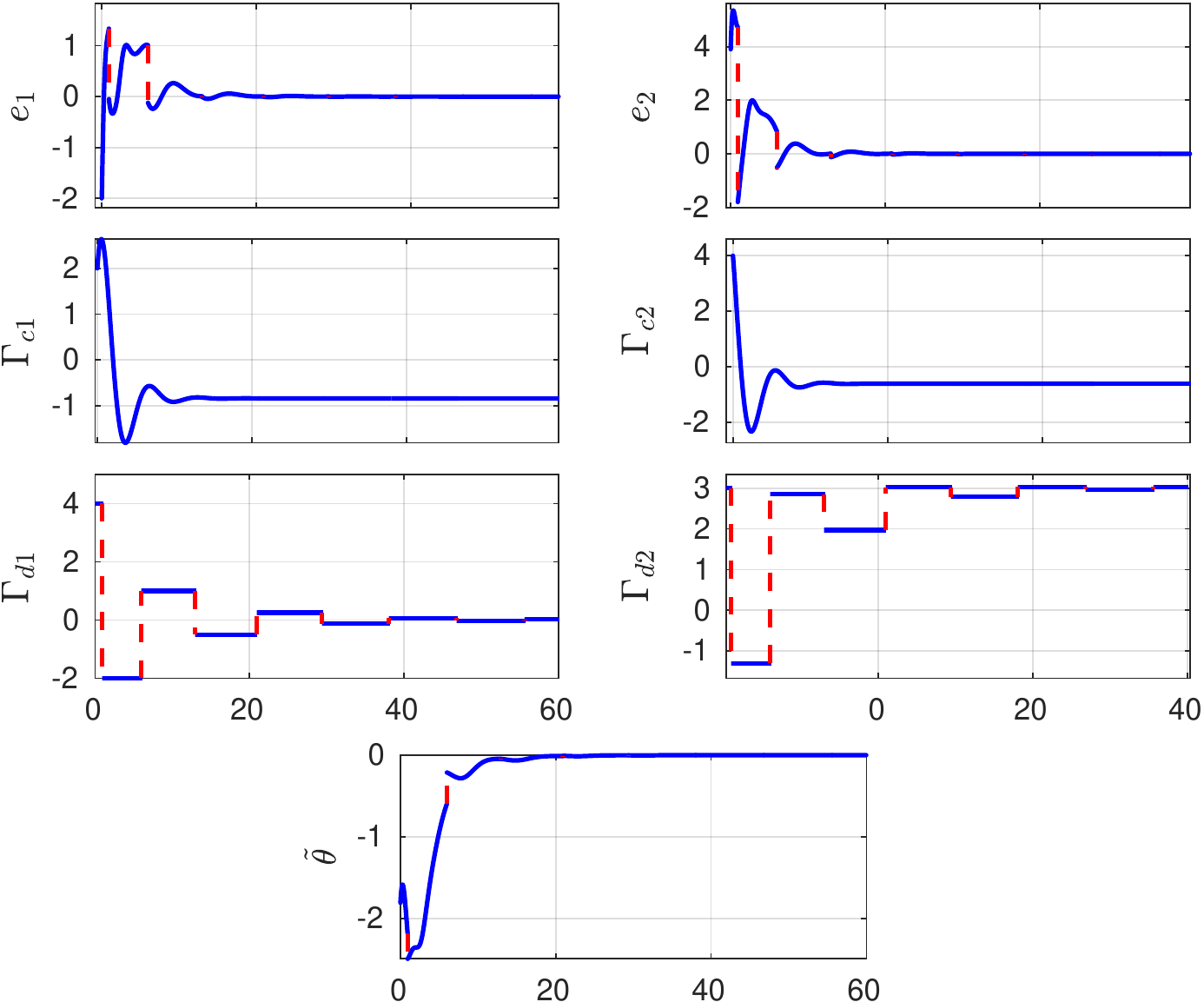}\\
	\end{center}
	\caption{Performance of the hybrid adaptive observer with $u \equiv 20$.}
	\label{fig:persis_hybrid}	
\end{figure}  

\clearpage
\section{Conclusion and Future Work}

This paper generalized some stability and robustness properties of linear time-varying systems,  encountered in estimation theory,  to the more general context of hybrid systems.   By introducing the class of linear (non-autonomous) hybrid systems in $\mathcal{H}_u$, we showed that a relaxed (hybrid) version of the well-known PE condition is sufficient to guarantee UES as well as ISS.   The proposed hybrid framework applies to the estimation problem when a linear input-output regression model is fed with hybrid data.  Furthermore,  it allows the design and analysis of adaptive observers/identifiers for a class of uncertain hybrid systems capable of tracking  both  the state and the unknown parameters.   For future work,   while we assumed the jumps of the estimation algorithm to be synchronized with the jumps of the hybrid regressor,  this condition may be unrealistic in practice,  since the regressor's jumps cannot always be detected instantaneously.  Hence,  robustness of the proposed approach with respect to delays in the  jumps detection could be analyzed along the lines of~\cite{altin2019hybrid}.

\subsection*{Appendix: Hybrid Comparison Lemma}

We introduce the following comparison lemma for hybrid systems,  which is a particular case of   \cite[Lemma C.1]{cai2009characterizations}.
\begin{lemma}
\label{lem:comparison}
Consider a hybrid arc $v : \dom v \rightarrow \mathbb{R}_{\geq 0} $ and assume the existence of $a$, $b>0$ such that
\begin{itemize}
    \item For all $(t,j) \in \dom v$ such that $(t,j+1) \notin \dom v$, $$\dot{v}(t,j) \leq -a v(t,j) + b.  $$ 
   \item For all $(t,j) \in \dom v$ such that $(t,j+1) \in \dom v$, $$v(t,j+1)-v(t,j) \leq -a v(t,j) + b.  $$
\end{itemize}
Then, there exists $c> 0$ such that $v(t,j) \leq e^{-a(t+j)} v(0,0) + c b$ for all $(t,j) \in \dom v$.
\end{lemma}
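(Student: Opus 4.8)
The plan is to decouple the flow and the jump behavior, turn each into an elementary one-step estimate, and then iterate. First I would parametrize $\dom v$ by the standard sequence $t_0 = 0 \le t_1 \le \cdots$, so that $\dom v = \bigcup_k [t_k,t_{k+1}] \times \{k\}$, and abbreviate $s_k := v(t_k,k)$ and $\Delta_k := t_{k+1}-t_k$.

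On each flow interval the first hypothesis gives $\dot v(t,k) \le -a\,v(t,k)+b$ for almost all $t$, so the classical (continuous-time) comparison lemma yields, for every $t \in [t_k,t_{k+1}]$,
\[
 v(t,k) \ \le\ e^{-a(t-t_k)}\,v(t_k,k) + \tfrac{b}{a}\bigl(1-e^{-a(t-t_k)}\bigr) \ \le\ e^{-a(t-t_k)}\,v(t_k,k) + \tfrac{b}{a}.
\]
At a jump the second hypothesis reads $v(t,j+1) \le (1-a)\,v(t,j)+b$; since $1-a \le e^{-a}$ for every real $a$ and $v \ge 0$, this upgrades to the cleaner bound $v(t,j+1) \le e^{-a}\,v(t,j)+b$, with no need to split cases on whether $a$ is larger or smaller than $1$.

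Next I would compose one flow interval with the subsequent jump to obtain the one-step recursion
\[
 s_{k+1} \ \le\ e^{-a(\Delta_k+1)}\,s_k + \beta\,b, \qquad \beta := 1+\tfrac{e^{-a}}{a},
\]
and iterate it. The homogeneous factor telescopes, $\prod_{k=0}^{J-1} e^{-a(\Delta_k+1)} = e^{-a(t_J+J)}$, because $\sum_k \Delta_k = t_J$ and there are exactly $J$ jumps; meanwhile each factor obeys $e^{-a(\Delta_k+1)} \le e^{-a}$, so the accumulated offset is dominated by a geometric series, $\beta b \sum_{m \ge 0} e^{-am} \le \tfrac{\beta}{1-e^{-a}}\,b$. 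This gives $s_J \le e^{-a(t_J+J)} s_0 + \tfrac{\beta}{1-e^{-a}}\,b$. Finally, for an arbitrary $(t,j) \in \dom v$ I apply the flow estimate over the last partial interval $[t_j,t]$ on top of the bound on $s_j$, which produces $v(t,j) \le e^{-a(t+j)}\,v(0,0) + c\,b$ with $c := \tfrac{1}{a}+\tfrac{\beta}{1-e^{-a}}$, as claimed.

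The only genuinely delicate point is the bookkeeping that makes the continuous decay in $t$ and the discrete decay in $j$ fuse into the single exponential $e^{-a(t+j)}$ while keeping the additive $b$-contributions summable; the geometric bound, valid because $\Delta_k \ge 0$ forces every one-step factor below $e^{-a} < 1$, is what closes this. The sign issue in the jump map when $a>1$ is disposed of at the outset by the tangent inequality $1-a \le e^{-a}$ together with nonnegativity of $v$.
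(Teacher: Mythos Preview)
Your proof is correct. The paper does not provide a self-contained argument but instead defers to \cite[Lemma C.1]{cai2009characterizations}, remarking only that the present lemma is the special case where the rate is the constant $\alpha(t,j)\equiv a$ and the comparison map is $\gamma_\alpha(r,t,j)=e^{-a(t+j)}r$. Your argument---Gronwall on each flow interval, the tangent bound $1-a\le e^{-a}$ combined with $v\ge 0$ at jumps, and then a telescoping/geometric-series iteration over the hybrid time domain---is precisely the elementary mechanism one would use to unpack that cited result in the constant-rate case; the approaches therefore coincide, with yours having the advantage of being self-contained and yielding the explicit constant $c=\tfrac{1}{a}+\tfrac{1+e^{-a}/a}{1-e^{-a}}$.
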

The proof follows the same steps as the proof of~\cite[Lemma C.1]{cai2009characterizations} for the particular case where the hybrid arc $\alpha(t,j)=a$ is constant,  and for which,  the map $\gamma_{\alpha}:\mathbb{R}_{\geq 0} \times \dom v$ is explicitly given by $\gamma_{\alpha}(r,t,j)= e^{-a(t+j)}r$.

\def\loria{Lor\'{\i}a}
  \def\nesic{Ne\v{s}i\'{c}\,}\def\nonumero{\def\numerodeitem{}}



\begin{thebibliography}{10}

\itemsep=1pt

\bibitem{ASTBOH}
K.~J. {\AA}strom and T.~Bohlin, ``Numerical identification of linear dynamic
  systems from normal operating records,'' in {\em {{\em Proc. of the 2nd IFAC
  Symp. on Theory of Self-adaptive Control Systems}}} (P.~H. Hammond, ed.),
  ({Nat. Phys. Lab., Teddington, England}), pp.~96--111, 1965.

\bibitem{AND77}
B.~D.~O. Anderson, ``Exponential stability of linear equations arising in
  adaptive identification,'' {\em IEEE Trans. on Automatic Control,}, vol.~22,
  pp.~83--88, Feb 1977.

\bibitem{doi:10.1137/0315002}
A.~P. Morgan and K.~S. Narendra, ``On the uniform asymptotic stability of
  certain linear nonautonomous differential equations,'' {\em SIAM Journal on
  Control and Optimization}, vol.~15, no.~1, pp.~5--24, 1977.

\bibitem{KHALIL}
H.~Khalil, {\em Nonlinear systems}.
\newblock New York: {Macmillan Publishing Co., 2nd ed.}, 1996.

\bibitem{al:LTVsystSCL}
A.~\loria\ and E.~Panteley, ``Uniform exponential stability of linear
  time-varying systems: revisited,'' {\em Syst. \& Contr. Letters}, vol.~47,
  no.~1, pp.~13--24, 2002.

\bibitem{TCLEETAC02}
T.~C. Lee and B.~S. Chen, ``A general stability criterion for time-varying
  systems using a modified detectability condition,'' {\em IEEE Trans. on
  Automatic Control}, vol.~47, no.~5, pp.~797--802, 2002.

\bibitem{TCLEEACC03}
T.~C. Lee, ``On the equivalence relations of detectability and {PE} conditions
  with applications to stability analysis of time-varying systemss,'' in {\em
  Proceedings of the 2003 American Control Conference (ACC)}, June 2003.

\bibitem{al:TACDELTAPE}
E.~Panteley, A.~\loria\, and A.~Teel, ``Relaxed persistency of excitation for
  uniform asymptotic stability,'' {\em IEEE Trans. on Automatic Contr.},
  vol.~46, no.~12, pp.~1874--1886, 2001.

\bibitem{al:TACDPEMAT}
A.~\loria, E.~Panteley, D.~Popovi\'c, and A.~Teel, ``A nested {Matrosov}
  theorem and persistency of excitation for uniform convergence in stable
  non-autonomous systems,'' {\em IEEE Trans. on Automatic Control}, vol.~50,
  no.~2, pp.~183--198, 2005.

\bibitem{tao2003adaptive}
G.~Tao, {\em Adaptive Control Design and Analysis}, vol.~37.
\newblock John Wiley \& Sons, 2003.

\bibitem{Narendra}
K.~S. Narendra and A.~M. Annaswamy, ``Persistent excitation in adaptive
  systems,'' {\em Int. J. of Contr.}, vol.~45, no.~1, pp.~127--160, 1987.

\bibitem{IOASUN}
P.~Ioannou and J.~Sun, {\em Robust adaptive control}.
\newblock {New Jersey, USA}: Prentice Hall, 1996.

\bibitem{kurdila1995persistency}
A.~Kurdila, F.~J. Narcowich, and J.~D. Ward, ``Persistency of excitation in
  identification using radial basis function approximants,'' {\em SIAM journal
  on control and optimization}, vol.~33, no.~2, pp.~625--642, 1995.

\bibitem{sridhar2022improving}
K.~Sridhar, O.~Sokolsky, I.~Lee, and J.~Weimer, ``Improving neural network
  robustness via persistency of excitation,'' in {\em 2022 American Control
  Conference (ACC)}, pp.~1521--1526, IEEE, 2022.

\bibitem{Besancon2007overview}
G.~Besan{\c{c}}on, ``An overview on observer tools for nonlinear systems,'' in
  {\em Nonlinear observers and applications} (G.~Besan{\c{c}}on, ed.), vol.~363
  of {\em Lecture Notes in Control and Information Sciences}, pp.~1--33,
  Springer-Verlag: Berlin Heidelberg, 2007.

\bibitem{989154}
Q.~Zhang, ``Adaptive observer for multiple-input-multiple-output (mimo) linear
  time-varying systems,'' {\em IEEE Transactions on Automatic Control},
  vol.~47, no.~3, pp.~525--529, 2002.

\bibitem{brockett2000rate}
R.~Brockett, ``The rate of descent for degenerate gradient flows,'' in {\em
  Proc. Math. Theory of Networks and Systems}, 2000.

\bibitem{JP4-SRIKANT-IJC}
N.~R. Chowdhury, S.~Sukumar, M.~Maghenem, and A.~\loria, ``On the estimation of
  algebraic connectivity in graphs with persistently exciting
  interconnections,'' {\em Int. J. of Contr.}, vol.~91, no.~1, pp.~132--144,
  2018.

\bibitem{chowdhary_IJC2014}
G.~Chowdhary, M.~M\"uhleggb, and E.~Johnson, ``Exponential parameter and
  tracking error convergence guarantees for adaptive controllers without
  persistency of excitation,'' {\em Int. J. Control}, 2014.

\bibitem{de2019persistency}
C.~De~Persis and P.~Tesi, ``On persistency of excitation and formulas for
  data-driven control,'' in {\em 2019 IEEE 58th Conference on Decision and
  Control (CDC)}, pp.~873--878, IEEE, 2019.

\bibitem{dorfler2022persistency}
I.~Markovsky, E.~Prieto-Araujo, and F.~D{\"o}rfler, ``On the persistency of
  excitation,'' {\em Automatica}, p.~110657, 2022.

\bibitem{praly2017convergence}
L.~Praly, ``{Convergence of the gradient algorithm for linear regression models
  in the continuous and discrete time cases},'' {Research report}, {PSL
  Research University, Mines ParisTech}, Feb. 2017.
\newblock Available online at: https://hal.archives-ouvertes.fr/hal-01423048.

\bibitem{CLAN8}
B.~D.~O. Anderson, R.~Bitmead, C.~J{ohnson, Jr.}, P.~Kokotovi{\'c}, R.~Kosut,
  I.~Mareels, L.~Praly, and B.~Riedle, {\em Stability of adaptive systems}.
\newblock {Cambridge, MA, USA}: The MIT Press, 1986.

\bibitem{NARANA}
K.~S. Narendra and A.~M. Annaswamy, {\em Stable adaptive systems}.
\newblock New Jersey: {Prentice-Hall, Inc.}, 1989.

\bibitem{besancon2006adaptive}
G.~Besan\c{c}on, J.~de~Le{\'o}n-Morales, and O.~Huerta-Guevara, ``On adaptive
  observers for state affine systems,'' {\em International journal of Control},
  vol.~79, no.~06, pp.~581--591, 2006.

\bibitem{loria2009adaptive}
A.~Lor{\'\i}a, E.~Panteley, and A.~Zavala-R{\'\i}o, ``Adaptive observers with
  persistency of excitation for synchronization of chaotic systems,'' {\em IEEE
  Transactions on Circuits and Systems I: Regular Papers}, vol.~56, no.~12,
  pp.~2703--2716, 2009.

\bibitem{gevers1988robustness}
M.~Gevers, I.~M.~Y. Mareels, and G.~Bastin, ``Robustness of adaptive observers
  for time varying systems,'' {\em IFAC Proceedings Volumes}, vol.~21, no.~10,
  pp.~5--9, 1988.

\bibitem{guyader2003adaptive}
A.~Guyader and Q.~Zhang, ``Adaptive observer for discrete time linear time
  varying systems,'' {\em IFAC Proceedings Volumes}, vol.~36, no.~16,
  pp.~1705--1710, 2003.

\bibitem{chryssolouris2013manufacturing}
G.~Chryssolouris, {\em Manufacturing systems: theory and practice}.
\newblock Springer Science \& Business Media, 2013.

\bibitem{sanfelice2016analysis}
R.~G. Sanfelice, ``Analysis and design of cyber-physical systems. a hybrid
  control systems approach,'' {\em Cyber-Physical systems: From theory to
  practice}, pp.~3--31, 2016.

\bibitem{chen2018mathematics}
G.-Q.~G. Chen and M.~Feldman, {\em The mathematics of shock
  reflection-diffraction and von Neumann's conjectures}, vol.~359.
\newblock Princeton University Press, 2018.

\bibitem{9794285}
N.~Zaupa, L.~Martinez-Salamero, C.~Olalla, and L.~Zaccarian, ``Hybrid control
  of self-oscillating resonant converters,'' {\em IEEE Transactions on Control
  Systems Technology}, pp.~1--8, 2022.

\bibitem{hespanha2007survey}
J.~P. Hespanha, P.~Naghshtabrizi, and Y.~Xu, ``A survey of recent results in
  networked control systems,'' {\em Proceedings of the IEEE}, vol.~95, no.~1,
  pp.~138--162, 2007.

\bibitem{ACCGradient}
A.~Saoud, M.~Maghenem, and R.~G. Sanfelice, ``A hybrid gradient algorithm for
  linear regression with hybrid signals,'' in {\em Proc. IEEE American Control
  Confernce}, pp.~4997--5002, 2021.

\bibitem{goebel2012hybrid}
R.~Goebel, R.~G. Sanfelice, and A.~R. Teel, {\em {Hybrid Dynamical Systems:
  Modeling, stability, and robustness}}.
\newblock Princeton University Press, 2012.

\bibitem{vidal2003observability}
R.~Vidal, A.~Chiuso, S.~Soatto, and S.~Sastry, ``Observability of linear hybrid
  systems,'' in {\em International Workshop on Hybrid Systems: Computation and
  Control}, pp.~526--539, Springer, 2003.

\bibitem{vazquez2018observability}
C.~R. V{\'a}zquez, D.~G{\'o}mez-Guti{\'e}rrez, and A.~Ram{\'\i}rez-Tevi{\~n}o,
  ``Observability of linear hybrid systems with unknown inputs and discrete
  dynamics modeled by petri nets.,'' {\em IFAC-PapersOnLine}, vol.~51, no.~16,
  pp.~163--168, 2018.

\bibitem{9682794}
R.~S. Johnson, S.~D. Cairano, and R.~G. Sanfelice, ``Parameter estimation for
  hybrid dynamical systems using hybrid gradient descent,'' in {\em 2021 60th
  IEEE Conference on Decision and Control (CDC)}, pp.~4648--4653, 2021.

\bibitem{al:DTINTLEM}
A.~Lor\'{\i}a, F.~Lamnabhi-Lagarrigue, and D.~Nesi\'c, ``Summation-type
  conditions for uniform asymptotic convergence in discrete-time systems:
  applications in identification,'' in {\em Proc. 44th. IEEE Conf. Decision
  Contr.}, pp.~6591--6595, 2005.

\bibitem{DESVID}
C.~Desoer and M.~Vidyasagar, {\em Feedback Systems: Input-Output Properties}.
\newblock New York: Academic Press, 1975.

\bibitem{9312975}
M.~U. Javed, J.~I. Poveda, and X.~Chen, ``Excitation conditions for uniform
  exponential stability of the cooperative gradient algorithm over weakly
  connected digraphs,'' {\em IEEE Control Systems Letters}, pp.~1--1, 2021.

\bibitem{anderson1969new}
B.~Anderson and J.~Moore, ``New results in linear system stability,'' {\em SIAM
  Journal on Control}, vol.~7, no.~3, pp.~398--414, 1969.

\bibitem{narendra2012stable}
K.~S. Narendra and A.~M. Annaswamy, {\em Stable Adaptive Systems}.
\newblock Courier Corporation, 2012.

\bibitem{bai-sastry_DT-PE}
E.~W. Bai and S.~S. Sastry, ``Persistency of excitation, sufficient richness
  and parameter convergence in discrete time adaptive control,'' Tech. Rep.
  UCB/ERL M84/91, EECS Department, University of California, Berkeley, Nov
  1984.

\bibitem{MORNAR}
A.~P. Morgan and K.~S. Narendra, ``{On the stability of nonautonomous
  differential equations $\dot x = [A+B(t)]x$ with skew-symmetric matrix
  $B(t)$},'' {\em SIAM J. on Contr. and Opt.}, vol.~15, no.~1, pp.~163--176,
  1977.

\bibitem{lofberg2004yalmip}
J.~Lofberg, ``Yalmip: A toolbox for modeling and optimization in matlab,'' in
  {\em 2004 IEEE international conference on robotics and automation (IEEE Cat.
  No. 04CH37508)}, pp.~284--289, IEEE, 2004.

\bibitem{altin2019hybrid}
B.~Alt{\i}n and R.~G. Sanfelice, ``Hybrid systems with delayed jumps:
  Asymptotic stability via robustness and lyapunov conditions,'' {\em IEEE
  Transactions on Automatic Control}, vol.~65, no.~8, pp.~3381--3396, 2019.

\bibitem{cai2009characterizations}
C.~Cai and A.~R. Teel, ``Characterizations of input-to-state stability for
  hybrid systems,'' {\em {Systems \& Control Letters}}, vol.~58, no.~1,
  pp.~47--53, 2009.

\end{thebibliography}
\end{document}